\newtheorem{theorem}{Theorem}[section]
\newtheorem{ex}{Example}[section]
\newtheorem{lemma}[theorem]{Lemma}
\newtheorem{prop}[theorem]{Proposition}
\newtheorem{remark}{Remark}[section]
\newtheorem{corollary}[theorem]{Corollary} 
\newtheorem{claim}{Claim}[section]
\newenvironment{proof-sketch}{\noindent{\bf Sketch of Proof}\hspace*{1em}}{\qed\bigskip}
\newcommand{\RR}{\mathbb R}
\newcommand{\NN}{\mathbb N}
\renewcommand{\leq}{\leqslant}
\renewcommand{\geq}{\geqslant}
\begin{document}
\title[Nonlinear singular problems with indefinite potential term]{Nonlinear singular problems with \\ indefinite potential term}
\author[N.S. Papageorgiou]{N.S. Papageorgiou}
\address[N.S. Papageorgiou]{ Department of Mathematics, National Technical University,
				Zografou Campus, 15780 Athens, Greece \& Institute of Mathematics, Physics and Mechanics, 1000 Ljubljana, Slovenia}
\email{\tt npapg@math.ntua.gr}
\author[V.D. R\u{a}dulescu]{V.D. R\u{a}dulescu}
\address[V.D. R\u{a}dulescu]{Faculty of Applied Mathematics, AGH University of Science and Technology, 30-059 Krak\'ow, Poland
 \& Institute of Mathematics, Physics and Mechanics, 1000 Ljubljana, Slovenia \& Department of Mathematics, University of Craiova, 200585 Craiova,, Romania}
\email{\tt vicentiu.radulescu@imfm.si}
\author[D.D. Repov\v{s}]{D.D. Repov\v{s}}
\address[D.D. Repov\v{s}]{Faculty of Education and Faculty of Mathematics and Physics, University of Ljubljana \& Institute of Mathematics, Physics and Mechanics, 1000 Ljubljana, Slovenia}
\email{\tt dusan.repovs@guest.arnes.si}
\keywords{Nonhomogeneous differential operator, indefinite potential, singular term, concave and convex nonlinearities, truncation, comparison principles, nonlinear regularity, nonlinear maximum principle.\\
\phantom{aa} 2010 AMS Subject Classification: 35J75, 35J92, 35P30}
\begin{abstract}
We consider a nonlinear Dirichlet problem driven by a nonhomogeneous differential operator plus an indefinite potential. In the reaction we have the competing effects of a singular term and of concave and convex nonlinearities. In this paper the concave term is parametric. We prove a bifurcation-type theorem describing the changes in the set of positive solutions as the positive parameter $\lambda$ varies. This work continues our research published in arXiv:2004.12583, where $\xi \equiv 0 $ and in the reaction the parametric term is the singular one.
\end{abstract}
\maketitle

\section{Introduction}\label{sec1}

Let $\Omega \subseteq \RR^{N} $ be a bounded domain with a $C^2$-boundary $\partial\Omega$. In this paper we study the following nonlinear nonhomogeneous parametric singular problem:
\begin{equation}
    \left\{
        \begin{array}{ll}
          -  \mbox{div}\, a(Du(z)) + \xi(z)u(z)^{p-1} = \vartheta(u(z)) + \lambda u(z)^{q-1} + f(z,u(z)) \ \mbox{in}\ \Omega, \\
            u|_{\partial\Omega} = 0,\ u>0,\ \lambda > 0,\ 1<q<p<\infty.
        \end{array}
    \right\}\tag{$P_{\lambda}$}\label{eqp}
\end{equation}

The map $a:\RR^{N} \rightarrow \RR^{N}$ involved in the differential operator of \eqref{eqp} is strictly monotone, continuous (hence maximal monotone, too) and satisfies certain other regularity and growth conditions which are listed in hypotheses $H(a)$ below (see Section \ref{sec2}). These conditions are general enough to incorporate in our framework many differential operators of interest such as the $p$-Laplacian and the $(p,q)$-Laplacian (that is, the sum of a $p$-Laplacian and a $q$-Laplacian). The operator $u\mapsto{\rm div}\,a(Du)$ is not homogeneous and this is a source of difficulties in the analysis of problem \eqref{eqp}. The potential function $\xi \in L^{\infty}(\Omega)$ is indefinite (that is, sign changing). So the operator $u \mapsto -{\rm div}\, a(Du) + \xi (z) |u|^{p-2} u $ is not coercive and this is one more difficulty in the analysis of problem \eqref{eqp}. In the reaction (the right-hand side of \eqref{eqp}), the term $\vartheta(\cdot)$ is singular at $x=0$, while the perturbation contains the combined effects of a parametric concave term $x\mapsto \lambda x^{q-1}$ ($x\geq 0 $) (recall that $q<p$), with $\lambda >0$ being the parameter and of a Carath\'eodory function $f(z,x)$ (that is, for all $x\in \RR$ the mapping
  $z\mapsto f(z,x)$ is measurable and for almost all $z\in \Omega$ the mapping $ x\mapsto f(z,x)$ is continuous), which is assumed to exhibit $(p-1)$-superlinear growth near $+\infty$, but without satisfying the usual for superlinear problems Ambrosetti-Rabinowitz condition (the AR-condition for short). So in problem \eqref{eqp} we have the competing effects of singular, concave and convex terms.

  Using variational methods related to the critical point theory, combined with suitable truncation, perturbation and comparison techniques, we produce a critical parameter value $\lambda^* >0$ such that
\begin{itemize}
    \item [(i)] for all $\lambda\in (0,\lambda^*)$ problem \eqref{eqp} has at least two positive solutions;
    \item [(ii)] for $\lambda = \lambda^*$ problem \eqref{eqp} has at least one positive solution;
    \item [(iii)] for all $\lambda > \lambda^*$ problem \eqref{eqp} has no positive solutions.
\end{itemize}

This work continues our research published in Papageorgiou, R\u{a}dulescu \& Repov\v{s} \cite{16}, where $\xi \equiv 0 $ and in the reaction the parametric term is the singular one (cf. arXiv:2004.12583). It is also related to the work of Papageorgiou \& Smyrlis \cite{17} and Papageorgiou \& Winkert \cite{18}, where the differential operator is the $p$-Laplacian, $\xi\equiv0$ and no concave terms are allowed. Singular $p$-Laplacian equations with no potential term and reactions of special form were considered  by Chu, Gao \& Sun \cite{2}, Giacomoni, Schindler \& Taka\v c \cite{5}, Li \& Gao \cite{11}, Mohammed \cite{12}, Perera \& Zhang \cite{20}, and Papageorgiou, R\u{a}dulescu \& Repov\v{s} \cite{14}.

\section{Mathematical background and hypotheses}\label{sec2}

In this section we present the main mathematical tools which we will use in the analysis of problem \eqref{eqp}. We also fix our notation and state the hypotheses on the data of the problem.

So, let $X$ be a Banach space, $X^*$ its topological dual, and let $\varphi \in C^1(X).$ We say that $\varphi(\cdot)$ satisfies the ``C-condition", if the following property holds:
$$
\begin{array}{ll}
``\mbox{Every sequence}\ \{u_n\}_{n\geq1}\subseteq X\ \mbox{such that}\\
\{\varphi(u_n)\}_{n\geq1}\subseteq\RR\ \mbox{is bounded and}
\ (1+||u_n||_X)\varphi'(u_n)\rightarrow0\ \mbox{in}\ X^*\ \mbox{as}\ n\rightarrow\infty,\\
\mbox{admits a strongly convergent subsequence}".
\end{array}
$$

This is a compactness-type condition on the functional $\varphi(\cdot)$, which leads to the minimax theory of the critical values of $\varphi(\cdot)$ (see, for example, Papageorgiou, R\u{a}dulescu \& Repov\v{s} \cite{15}). We denote by $K_\varphi$ the critical set of $\varphi$, that is,
$$ K_{\varphi} = \{ u\in X: \varphi'(u)=0\}. $$

The main spaces in the analysis of problem \eqref{eqp} are the Sobolev space $W^{1,p}_0(\Omega)$ $(1<p<\infty)$ and the Banach space $C_0^1(\overline{\Omega})=\{u\in C^1(\overline{\Omega}): u|_{\partial\Omega} = 0\}$. We denote by $||\cdot||$ the norm of $W^{1,p}_0$. By the Poincar\'e inequality we have
$$ ||u|| = ||Du||_p\ \mbox{for all}\ u \in W^{1,p}_0(\Omega). $$

The Banach space $C_0^1(\Omega)$ is ordered with positive (order) cone $$C_+ = \{u\in C_0^1(\overline{\Omega}): u(z)\geq 0\ \mbox{for all}\ z\in\overline{\Omega} \}.$$ This cone has a nonempty interior given by
$$
\begin{array}{ll}
        {\rm int}\, C_+ = \left\{u\in C_+ : u(z) >0\ \mbox{for all}\ z\in \Omega ,\ \frac{\partial u}{\partial n} |_{\partial\Omega} <0 \right\} , \\
        \mbox{with}\ n(\cdot)\ \mbox{being the outward unit normal on}\ \partial\Omega.
\end{array}
$$

We will also use two additional ordered Banach spaces. The first one is
$$ C_0(\overline{\Omega}) = \{ u\in C(\overline{\Omega}):u|_{\partial\Omega} = 0 \}.$$
This cone is ordered with positive (order) cone $$K_+ = \{ u \in C_0 (\overline{\Omega}): u(z) \geq 0\ \mbox{for all $z\in\overline{\Omega}$}\}.$$ This cone has a nonempty interior given by
$$ {\rm int}\,K_+ = \{ u \in K_+: c_u \hat{d} \leq u \ \mbox{for some}\ c_u>0 \}, $$
where $\hat{d}(z) = d(z,\partial\Omega)\ \mbox{for all}\ z\in\overline{\Omega}$. On account of Lemma 14.16 of Gilbarg \& Trudinger \cite[p. 355]{6},  we have
\begin{equation}
      ``c_u \hat{d} \leq u \ \mbox{for some}\ c_u > 0\ \mbox{if and only if}\ \hat{c}_u \hat{u}_1 \leq u\ \mbox{for some} \ \hat{c}_u > 0 ",
    \label{eq1}
\end{equation}
with $\hat{u}_1$ being the positive, $L^p$-normalized (that is, $||\hat{u}_1||_p=1$) eigenfunction corresponding to the principal eigenvalue $\hat{\lambda}_1>0$ of the Dirichlet $p$-Laplacian. The nonlinear regularity theory and the nonlinear maximum principle (see, for example, Gasinski \& Papageorgiou \cite[pp.737-738]{4}), imply that $\hat{u}_1\in{\rm int}\,C_+$.

The second ordered space is $C^1(\overline{\Omega})$ with positive (order) cone
$$ \hat{C}_+ = \left\{ u \in C^1(\overline{\Omega}): u(z) \geq 0 \ \mbox{for all}\ z\in\overline{\Omega}, \ \frac{\partial u}{\partial n}|_{\partial\Omega \cap u^{-1}(0)} <0 \right\}.$$
Clearly, this cone has a nonempty interior.

Concerning ordered Banach spaces with an order cone which has a nonempty interior (solid order cone), we have the following result which will be useful in our analysis (see Papageorgiou, R\u{a}dulescu \& Repov\v{s} \cite[Proposition 4.1.22]{15}).

\begin{prop}\label{prop1}
    If $X$ is an ordered Banach space with positive (order) cone K, int\,K $\neq \emptyset$, and $e \in int\,K$, then for every $u\in X$ we can find $\lambda_u >0$ such that $\lambda_u e - u\in K.$
\end{prop}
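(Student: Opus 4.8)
The plan is to exploit the fact that $e$ is an interior point of the cone $K$, combined with the invariance of $K$ under multiplication by nonnegative scalars. Since $e \in {\rm int}\,K$, I would begin by fixing a radius $r>0$ such that the open ball $B(e,r) = \{v\in X : ||v-e||_X < r\}$ is entirely contained in $K$; this is exactly what membership in the interior provides.

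Next, given an arbitrary $u \in X$, I would examine the vector $e - \tfrac{1}{\mu}u$ for a positive scalar $\mu$ to be specified. Its distance from $e$ is $||e - \tfrac{1}{\mu}u - e||_X = \tfrac{1}{\mu}||u||_X$, so by choosing any $\mu > ||u||_X/r$ (taking simply any $\mu>0$ in the trivial case $u=0$) one guarantees that $e - \tfrac{1}{\mu}u$ lies in the ball $B(e,r)$, and therefore in $K$.

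Finally, I would invoke the defining property of an order cone, namely that $K$ is stable under multiplication by nonnegative reals. Multiplying the element $e - \tfrac{1}{\mu}u \in K$ by $\mu>0$ then yields $\mu e - u \in K$, and setting $\lambda_u = \mu$ completes the proof. Since the argument reduces to this ``ball-plus-scaling'' observation, there is no genuine obstacle; the only points needing care are checking that the chosen scalar multiple remains in $K$ (which is immediate from the cone structure) and handling the degenerate case $u=0$ (for which any $\lambda_u>0$ works, consistently with the stated bound).
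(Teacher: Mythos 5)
Your argument is correct and complete: the ball $B(e,r)\subseteq K$ furnished by $e\in{\rm int}\,K$, the choice $\mu>\|u\|_X/r$, and the stability of $K$ under multiplication by positive scalars give exactly $\mu e-u\in K$. The paper does not prove this proposition but cites it from \cite[Proposition 4.1.22]{15}, and your ``ball-plus-scaling'' reasoning is precisely the standard proof given there.
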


Let $l\in C^1(0,\infty)$ with $l(t)>0$ for all $t>0$. We assume that
\begin{equation}
    \begin{array}{ll}
        0<\hat{c}\leq\frac{l'(t)t}{l(t)}\leq c_0, c_1 t^{p-1}\leq l(t) \leq c_2[t^{s-1}+ t^{p-1}] \\
        \mbox{for all}\ t>0,\ \mbox{and some}\ c_{1},c_{2}>0, 1\leq s< p.
    \end{array}
    \label{eq2}
\end{equation}

Then the conditions on the map $a(\cdot)$ are the following:

\smallskip
$H(a): a(y) = a_0(|y|)y$ for all $y\in\RR^{N}$, with $a_0(t)>0$ for all $t>0$ and
\begin{itemize}
    \item [(i)] $a_0\in C^1(0,+\infty),\ t\mapsto a_0 (t)$ is strictly increasing on $(0,+\infty)$, $a_0(t) t\rightarrow 0^+$ as $t\rightarrow 0^+$ and
        $$ \lim_{t\to 0^+}\frac{a'_0(t)t}{a_0(t)} > -1; $$
    \item [(ii)] there exists $c_3>0$ such that
        $$ |\nabla a(y)| \leq c_3 \frac{l(|y|)}{|y|}\ \mbox{for all}\ y\in \RR^{N} \backslash \{0\}; $$
    \item [(iii)] $(\nabla a(y)\xi,\xi)_{\RR^{N}} \geq \frac{l(|y|)}{|y|}|\xi|^2$ for all $y\in\RR^{n} \backslash \{0\}$, $\xi\in\RR^{N}$;
    \item [(iv)] if $G_0(t) = \int_{0}^{t} a_0(s)s ds $, then there exists $\tau\in (q,p]$ such that
        $$ \limsup_{t\to 0^+} \frac{\tau G_0(t)}{t^{\tau}} \leq c^* $$
\end{itemize}
and $0\leq p G_0(t) - a_0(t) t^2$ for all $t>0$.
\begin{remark}\label{rem1}
    Hypotheses $H(a)(i),(ii),(iii)$ are dictated by the nonlinear regularity theory of Lieberman [10] and the nonlinear maximum principle of Pucci \& Serrin \cite{21}. Hypothesis $H(a)(iv)$ serves the needs of our problem, but in fact, it is a mild condition and it is satisfied in all cases of interest (see the examples below). These conditions were used by Papageorgiou \& R\u{a}dulescu \cite{13} and by Papageorgiou, R\u{a}dulescu \& Repov\v{s} \cite{16}.
\end{remark}

Hypotheses $H(a)$ imply that the primitive $G_0(\cdot)$ is strictly increasing and strictly convex. We set $G(y) = G_0(|y|)$ for all $y\in \RR^{N}.$ Evidently $G(\cdot)$ is convex, $G(0) = 0$ and
$$
\begin{array}{ll}
    \nabla G(y) = G_0'(|y|)\frac{y}{|y|} = a_0 (|y|)y = a(y)\ \mbox{for all}\ y\in\RR^{N}\backslash\{0\},\ \nabla G(0)=0,\\
    \mbox{that is},\ G(\cdot) \ \mbox{is the primitive of}\ a(\cdot).\ \mbox{From the convexity of}\ G(\cdot)\ \mbox{we have}
\end{array}
$$
\begin{equation}
    G(y)\leq(a(y),y)_{\RR^{N}}\ \mbox{for all}\ y\in\RR^{N}.
    \label{eq3}
\end{equation}

Using hypotheses $H(a)(i),(ii),(iii)$ and \eqref{eq2}, we can easily obtain the following lemma, which summarizes the main properties of the map $a(\cdot)$.
\begin{lemma}\label{lem2}
If hypotheses $H(a)(i),(ii),(iii)$ hold, then
\begin{itemize}
    \item [(a)] the map $y\mapsto a(y)$ is continuous, strictly monotone (hence maximal monotone, too);
    \item [(b)] $|a(y)| \leq c_4 (|y|^{s-1} +|y|^{p-1})$ for some $c_4>0$, and all $y\in\RR^{N}$;
    \item [(c)] $(a(y),y)_{\RR^{N}} \geq \frac{c_1}{p-1} |y|^p$ for all $y\in\RR^{N}$.
\end{itemize}
\end{lemma}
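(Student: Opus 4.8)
The plan is to derive all three assertions directly from the structural representation $a(y)=a_0(|y|)y$, reducing everything to the scalar map $\beta(t)=a_0(t)t$ and to the fundamental theorem of calculus applied along rays and segments, feeding in the one-dimensional information of $H(a)(i)$--$(iii)$ and \eqref{eq2}.

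For (a) I would first settle continuity: on $\RR^N\setminus\{0\}$ the map $y\mapsto a_0(|y|)y$ is continuous because $a_0\in C^1(0,+\infty)$, while at the origin $|a(y)|=a_0(|y|)|y|\to 0^+$ by $H(a)(i)$, so putting $a(0)=0$ produces a globally continuous map. For strict monotonicity I would fix $y\neq w$ and study $\Phi(t)=(a(z_t),y-w)_{\RR^N}$ on $[0,1]$, where $z_t=w+t(y-w)$. This $\Phi$ is continuous by continuity of $a$ and is $C^1$ off the at most one value of $t$ at which $z_t=0$, with $\Phi'(t)=(\nabla a(z_t)(y-w),y-w)_{\RR^N}\ge \frac{l(|z_t|)}{|z_t|}|y-w|^2>0$ by $H(a)(iii)$. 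Hence $\Phi$ is strictly increasing and $(a(y)-a(w),y-w)_{\RR^N}=\Phi(1)-\Phi(0)>0$. Maximal monotonicity follows because an everywhere-defined, continuous (hence hemicontinuous) monotone operator is maximal monotone.

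For (c) I would use the same idea along the ray $t\mapsto ty$: since $a(0)=0$, $(a(y),y)_{\RR^N}=\int_0^1(\nabla a(ty)y,y)_{\RR^N}\,dt$, and $H(a)(iii)$ bounds the integrand below by $\frac{l(t|y|)}{t|y|}|y|^2$; inserting the lower estimate $l(\tau)\ge c_1\tau^{p-1}$ from \eqref{eq2} gives $(a(y),y)_{\RR^N}\ge c_1|y|^p\int_0^1 t^{p-2}\,dt=\frac{c_1}{p-1}|y|^p$, the integral converging since $p>1$. For (b) I would compute $\nabla a(y)=a_0(|y|)I+a_0'(|y|)|y|^{-1}\,y\otimes y$, a symmetric matrix with eigenvalues $a_0(|y|)$ (multiplicity $N-1$) and $a_0(|y|)+a_0'(|y|)|y|=\beta'(|y|)$; as $a_0$ is strictly increasing both are positive, so $|\nabla a(y)|=\beta'(|y|)\le c_3\,l(|y|)/|y|$ by $H(a)(ii)$. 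Integrating and using $l(\tau)\le c_2(\tau^{s-1}+\tau^{p-1})$ then yields $|a(y)|=\beta(|y|)=\int_0^{|y|}\beta'(\tau)\,d\tau\le c_4(|y|^{s-1}+|y|^{p-1})$.

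The only genuinely delicate point is the behaviour near the origin, where $\nabla a$ may be singular; I expect the main care to be needed in the limiting case $s=1$ of \eqref{eq2}, for which the crude bound $\int_0^{|y|}\tau^{s-2}\,d\tau$ diverges. There I would avoid integrating the pointwise estimate from $0$ and instead argue separately on $(0,1]$, where $\beta(t)=a_0(t)t$ is continuous with $\beta(0^+)=0$, hence bounded, so $\beta(t)\le c=c\,t^{s-1}$, and on $[1,\infty)$, where integration of the growth bound gives control by $|y|^{p-1}$; this reproduces the two-term estimate for every $s\in[1,p)$. All remaining steps are routine computations, consistent with the authors' remark that the lemma follows easily.
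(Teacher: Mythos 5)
The paper gives no proof of Lemma~\ref{lem2} at all --- it is stated as an easy consequence of $H(a)(i),(ii),(iii)$ and \eqref{eq2}, with the details implicitly deferred to the references \cite{13,16} --- and your argument is a correct reconstruction of the standard one: integration of the ellipticity bound $H(a)(iii)$ along segments and rays yields the strict monotonicity in (a) and the coercivity estimate (c) (the integral $\int_0^1 t^{p-2}\,dt=\frac{1}{p-1}$ producing exactly the stated constant), while integration of the bound on $|\nabla a|$ from $H(a)(ii)$ combined with $l(t)\leq c_2[t^{s-1}+t^{p-1}]$ gives (b), and maximal monotonicity follows since an everywhere-defined continuous monotone map on $\RR^N$ is maximal monotone. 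The two delicate points --- the segment in the monotonicity argument possibly passing through the origin, and the borderline case $s=1$ in (b), where one must not integrate $\tau^{s-2}$ from $0$ --- are both identified and correctly handled, so the proposal is complete.
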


Using this lemma and \eqref{eq3}, we obtain the following growth estimates for the primitive $G(\cdot)$.
\begin{corollary}\label{cor3}
If hypotheses $H(a)(i),(ii),(iii)$ hold, then $\frac{c_1}{p(p-1)} |y|^{p} \leq G(y) \leq c_5 (1+|y|^p) $ for some $c_5>0$, and all $y\in\RR^{N}$.
\end{corollary}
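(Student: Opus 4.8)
The plan is to establish the two inequalities separately, deriving each from the corresponding estimate on $a(\cdot)$ in Lemma~\ref{lem2} together with the integral representation $G(y)=G_0(|y|)=\int_0^{|y|}a_0(s)s\,ds$. Throughout I would rewrite the vector estimates of Lemma~\ref{lem2} in scalar form using $a(y)=a_0(|y|)y$, so that $(a(y),y)_{\RR^N}=a_0(|y|)|y|^2$ and $|a(y)|=a_0(|y|)|y|$.

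For the lower bound, I would start from Lemma~\ref{lem2}(c), which in terms of $a_0$ reads $a_0(t)t^2\geq\frac{c_1}{p-1}t^p$, that is, $a_0(t)t\geq\frac{c_1}{p-1}t^{p-1}$ for all $t>0$. Integrating this pointwise inequality over $[0,|y|]$ gives
$$G(y)=\int_0^{|y|}a_0(s)s\,ds\geq\frac{c_1}{p-1}\int_0^{|y|}s^{p-1}\,ds=\frac{c_1}{p(p-1)}|y|^p,$$
which is precisely the claimed left-hand inequality.

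For the upper bound, the quickest route is to invoke the convexity inequality \eqref{eq3}, namely $G(y)\leq(a(y),y)_{\RR^N}$, and then bound its right-hand side by the Cauchy--Schwarz inequality together with the growth estimate of Lemma~\ref{lem2}(b):
$$G(y)\leq(a(y),y)_{\RR^N}\leq|a(y)|\,|y|\leq c_4\big(|y|^{s-1}+|y|^{p-1}\big)|y|=c_4\big(|y|^s+|y|^p\big).$$
Since $s<p$, the subcritical power $|y|^s$ is controlled by $1+|y|^p$ (it is bounded by $1$ when $|y|\leq 1$ and by $|y|^p$ when $|y|\geq 1$), so absorbing constants yields $G(y)\leq c_5(1+|y|^p)$ for a suitable $c_5>0$.

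I expect no serious obstacle here: the statement is a direct consequence of Lemma~\ref{lem2} and \eqref{eq3}. The only points requiring a moment's care are the conversion of the vector estimates into the scalar inequalities for $a_0$ before integrating, and the elementary observation that $s<p$ allows the term $|y|^s$ to be dominated by $1+|y|^p$ uniformly in $y$.
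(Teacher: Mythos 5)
Your proof is correct and follows exactly the route the paper indicates: the lower bound by integrating the scalar form of Lemma~\ref{lem2}(c), and the upper bound by combining the convexity inequality \eqref{eq3} with the growth estimate of Lemma~\ref{lem2}(b) and the fact that $s<p$. The paper gives no further detail than ``using this lemma and \eqref{eq3}'', so your argument is precisely the intended one.
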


The examples that follow confirm that the framework provided by hypotheses $H(a)$ is broad and includes many differential operators of interest (see \cite{13}).
\begin{ex}\label{ex1}
	\begin{itemize}
       \item [(a)] $a(y) = |y|^{p-2}y$ with $1<p<\infty$.
       \item [] This map corresponds to the $p$-Laplace differential operator defined by
        $$ D_p u = {\rm div}\, (|Du|^{p-2} Du)\ \mbox{for all}\ u\in W^{1,p}_0 (\Omega). $$
       \item [(b)] $a(y)=|y|^{p-2} y + \mu|y|^{q-2} y$ with $1<q<p<\infty,\ \mu \geq 0$.
       \item [] This map corresponds to the $(p,q)$-Laplace differential operator defined by
        $$ D_p u + D_q u \ \mbox{for all}\ u\in W^{1,p}_0(\Omega). $$
       \item [] Such operators arise in models of physical processes. We mention the works of Cherfils \& Ilyasov \cite{1} (reaction-diffusion systems) and Zhikov \cite{22} (homogenization of composites consisting of two materials with distinct hardening exponents, in elasticity theory).
        \item [(c)] $a(y) =(1+|y|^2)^{\frac{p-2}{2}}y$ with $1<p<\infty$.
        \item []This map corresponds to the modified capillary operator.
        \item [(d)] $a(y) = |y|^{p-2}y\left(1+ \frac{1}{1+ |y|^p}\right)$ with $1<p<\infty.$
	\end{itemize}
\end{ex}

The hypotheses on the potential term $\xi(\cdot)$ and on the singular part $\vartheta(\cdot)$ of the reaction are the following:

\smallskip
$H(\xi): \xi\in L^\infty(\Omega).$

\smallskip
$H(\vartheta): \vartheta:(0,+\infty)\rightarrow (0,+\infty)$ is a locally Lipschitz function such that
\begin{itemize}
    \item [(i)] for some $\gamma\in(0,1)$ we have
        $$ 0 < c_6\leq \liminf_{x\to 0^+}\vartheta(x)x^{\gamma} \leq \limsup_{x\to 0^+}\vartheta(x) x^\gamma \leq c_7; $$
    \item [(ii)] $\vartheta(\cdot)$ is nonincreasing.
\end{itemize}

\begin{remark}\label{rem2}
In the literature we almost always encounter the following particular singular term
$$ \vartheta(x) = x^{-\gamma}\ \mbox{for all}\ x>0, \ \mbox{with}\ 0<\gamma<1. $$
\end{remark}

Of course, hypotheses $H(\vartheta)$ provide a much more general framework and can accomodate also singularities like the ones that follow:
\begin{eqnarray*}
    &&\vartheta_1(x)=x^{-\gamma}\left[1+\ln(1+x)\right],\quad x>0,  \\
    &&\vartheta_2(x)=x^{-\gamma}e^{-x},\quad x>0 \\
    &&\vartheta_3(x)= \left\{
            \begin{array}{ll}
                    x^{-\gamma}(1-\eta \sin x )& 0<x\leq \frac{\pi}{2}\\
                    x^{-\gamma}(1-\eta)& \ \mbox{if}\ \frac{\pi}{2} <x
            \end{array}
            \ \mbox{with}\  0<\gamma<1.
        \right.
\end{eqnarray*}

The following strong comparison principle can be found in Papageorgiou, R\u{a}dulescu \& Repov\v{s} \cite[Proposition 6]{16} (see also Papageorgiou \& Smyrlis \cite[Proposition 4]{17}).

\begin{prop}\label{prop4}
    If hypotheses $H(a),H(\vartheta)$ hold, $\hat{\xi}\in L^{\infty}(\Omega)$, $\hat{\xi}(z) \geq 0$  for almost all $z\in \Omega$, $h_1,h_2 \in L^{\infty}(\Omega)$ satisfy
    $$ 0<c_8\leq h_2(z) - h_1(z) \ \mbox{for almost all}\ z\in\Omega$$
    and $u,v\in C^{1,\alpha}(\overline{\Omega})$ satisfy $0<u(z)\leq v(z)$ for all $z\in\Omega$ and for almost all $z\in\Omega$ we have
    \begin{itemize}
        \item $-{\rm div}\, a(Du(z)) - \vartheta(u(z)) + \xi(z) u (z)^{p-1} = h_1(z)$
        \item $- {\rm div}\, a(Dv(z) - \vartheta(v(z)) + \xi(z) v(z)^{p-1} = h_2(z)$,
    \end{itemize}
    then $v-u\in {\rm int}\,\hat{C}_+.$
\end{prop}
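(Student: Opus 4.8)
The plan is to set $y = v - u$, which belongs to $C^{1,\alpha}(\overline{\Omega})$ and satisfies $y \geq 0$ on $\Omega$ by hypothesis, and to show $y \in {\rm int}\,\hat{C}_+$ by first proving strict positivity in the interior and then the strict sign of the outer normal derivative at boundary zeros. Subtracting the two equations gives, for almost all $z \in \Omega$,
$$ -{\rm div}\,(a(Dv)-a(Du)) + \hat{\xi}(v^{p-1}-u^{p-1}) = (h_2 - h_1) + (\vartheta(v) - \vartheta(u)). $$
First I would exploit the two monotonicity facts available: since $\vartheta(\cdot)$ is nonincreasing (hypothesis $H(\vartheta)(ii)$) and $0 < u \leq v$, we have $\vartheta(v) - \vartheta(u) \leq 0$; and since $\hat{\xi} \geq 0$ with $v^{p-1} \geq u^{p-1}$, the second left-hand term is nonnegative. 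Moving the $\vartheta$-difference to the left with its favorable sign, the entire zeroth-order contribution $\hat{\xi}(v^{p-1}-u^{p-1}) + (\vartheta(u)-\vartheta(v))$ becomes nonnegative.

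Next I would linearise. Writing $a(Dv)-a(Du) = A(z)\,Dy$ with $A(z) = \int_0^1 \nabla a(Du + t\,Dy)\,dt$, hypothesis $H(a)(iii)$ makes $A(z)$ symmetric and positive definite, so the principal part is elliptic (possibly degenerate where the gradients vanish). On every compact subset of $\Omega$ the function $u$ is bounded away from $0$, so by the local Lipschitz continuity of $\vartheta$ and the mean value theorem the nonnegative zeroth-order terms can be written as $c(z)\,y$ with $c \geq 0$ and $c \in L^\infty_{\rm loc}(\Omega)$. Thus $y \geq 0$ is a weak solution of
$$ -{\rm div}\,(A(z)\,Dy) + c(z)\,y = h_2 - h_1 \geq c_8 > 0 \quad \mbox{in}\ \Omega, \qquad c(z) \geq 0. $$
If $y$ vanished at some interior point it would attain there the minimum value $0$; the strong maximum principle for such quasilinear operators, in the degenerate generality permitted by $H(a)$ (see Pucci \& Serrin \cite{21}), would then force $y \equiv 0$ on a neighbourhood, contradicting the strictly positive right-hand side. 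Hence $y > 0$ throughout $\Omega$.

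Finally, for a boundary point $z_0 \in \partial\Omega$ with $y(z_0) = 0$, I would invoke the boundary point (Hopf-type) lemma for the same operator to conclude $\partial y/\partial n(z_0) < 0$; together with $y > 0$ in $\Omega$ this is exactly the membership $y \in {\rm int}\,\hat{C}_+$. The main obstacle is precisely this boundary analysis: near $\partial\Omega$ both $u$ and $v$ tend to $0$, so $\vartheta(u)$ and $\vartheta(v)$ blow up and the linearised coefficient $c(z)$ need not remain bounded, which blocks a naive application of the linear Hopf lemma. The remedy is to use the quasilinear boundary point lemma adapted to operators satisfying $H(a)$ together with a barrier built from the distance function $\hat{d}$, controlling the singular terms through the precise rate $\vartheta(x) \sim x^{-\gamma}$ furnished by $H(\vartheta)(i)$ and the equivalence \eqref{eq1}; crucially, the monotonicity of $\vartheta$ keeps the singular contribution on the correct (nonnegative) side, so that it assists rather than obstructs the comparison. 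This is where the bulk of the technical work resides, and it is the step I expect to be hardest.
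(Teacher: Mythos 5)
You should first be aware that the paper does not prove Proposition \ref{prop4} at all: it imports the statement from \cite[Proposition 6]{16} and \cite[Proposition 4]{17}, so the only meaningful comparison is with the arguments in those references. Your outline follows the same overall strategy they do --- subtract the two equations, use the monotonicity of $\vartheta$ and the sign of $\hat{\xi}$ to put the zeroth-order terms on the favourable side, linearize the principal part, and conclude via an interior strong maximum principle plus a Hopf-type boundary point lemma. As a plan this is correct; as a proof it has two genuine gaps, and they sit exactly where all the content of the proposition lives.

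The first gap is the interior step. The matrix $A(z)=\int_0^1\nabla a(Du+tDy)\,dt$ is not a bounded, uniformly elliptic coefficient matrix: by $H(a)(ii),(iii)$ and \eqref{eq2} its eigenvalues are controlled above and below by $\int_0^1 l(|Du+tDy|)/|Du+tDy|\,dt$, which degenerates (for $p>2$) or blows up (for $p<2$) wherever $Du$ and $Dv$ vanish simultaneously. The strong maximum principle of Pucci \& Serrin \cite{21} that you invoke is formulated for quasilinear equations in $y$ itself, not for linear operators with such a degenerate measurable matrix, and the strong comparison principle for $p$-Laplacian-type operators is known to fail in general at common critical points of $u$ and $v$. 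What saves the statement is precisely the uniform gap $h_2-h_1\geq c_8>0$, and it has to be exploited \emph{at} a touching point $z_0$ (where necessarily $Du(z_0)=Dv(z_0)$, with the cases $Du(z_0)\neq 0$ and $Du(z_0)=0$ treated separately); your sketch only brings in $c_8$ after already assuming the dichotomy ``$y>0$ near $z_0$ or $y\equiv 0$ near $z_0$'', which is exactly what the degeneracy puts in doubt. The second gap is the boundary step: you correctly diagnose that the coefficient produced by applying the mean value theorem to $\vartheta$ blows up like $u^{-\gamma-1}$ near $\partial\Omega$ and defeats the standard Hopf barrier, but you then only gesture at ``a barrier built from the distance function'' without constructing it or verifying the barrier inequality against the singular term. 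Since you yourself flag this as the hardest part, the proposal should be assessed as a correct high-level outline, consistent with the cited proofs, in which the two decisive steps are named but not carried out.
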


In what follows, $p^*$ is the critical Sobolev exponent corresponding to $p$, that is,
$$
p^* =\left\{
        \begin{array}{ll}
            \frac{Np}{N-p}& \mbox{if}\ p<N.\\
            +\infty& \mbox{if}\ N\leq p.
        \end{array}
    \right.
$$
Now we introduce our hypotheses on the nonlinearity $f(z,x)$.

\smallskip
$H(f): f:\Omega\times\RR\rightarrow\RR_+$ is a Carath\'eodory function such that $f(z,0)=0$ for almost all $z\in\Omega$ and
\begin{itemize}
    \item [(i)] $f(z,x) \leq a(z) (1+x^{r-1})$ for almost all $z\in\Omega$, and all $x\geq 0$, with $a\in L^{\infty}(\Omega)$, $p<r<p^*;$
    \item [(ii)] if $F(z,x)=\int_0^x f(z,s) ds,$
    \item [] then $\lim_{x\to+\infty}\frac{F(z,x)}{x^{p}} = +\infty$ uniformly for almost all $z\in\Omega$;
    \item [(iii)] there exists $\sigma\in((r-p) \max\{ \frac{N}{p}, 1 \},p^*),\ \sigma>q$ such that
		$$ 0<\hat{\beta}_0\leq \liminf_{x\to+\infty} \frac{f(z,x)x-pF(z,x)}{x^{\sigma}}\ \mbox{uniformly for almost all}\ z\in\Omega; $$
    \item [(iv)] $\limsup_{x\rightarrow 0^+} \frac{f(z,x)}{x^{r-1}} \leq \eta_0 $ uniformly for almost all $z\in\Omega$;
    \item [(v)] for every $\rho>0$, there exists $\hat{\xi}_\rho>0$ such that for almost all $z\in\Omega$ the function
    $$ x\mapsto f(z,x) + \hat{\xi}_\rho x^{\rho-1} $$
    is nondecreasing on $[0,\rho]$.
\end{itemize}
\begin{remark}\label{rem3}
    Since our aim is to find positive solutions and the above hypotheses concern the positive semiaxis $\RR_+ = [0,+\infty)$, we may assume that
    \begin{equation}
        f(z,x) = 0,\ \mbox{for almost all}\ z\in\Omega, \ \mbox{and all}\ x\leq 0.
        \label{eq4}
    \end{equation}
\end{remark}

Hypotheses $H(f)(ii),(iii)$ imply that
\begin{equation}
    \lim_{x\to+\infty} \frac{f(z,x)}{x^{p-1}} = +\infty \ \mbox{uniformly for almost all}\ z\in\Omega.
    \label{eq5}
\end{equation}

So, the nonlinearity $f(z,\cdot)$ is $(p-1)$-superlinear near $+\infty$. However, this superlinearity of $f(z,\cdot)$ is not formulated using the AR-condition. We recall that the AR-condition (unilateral version due to \eqref{eq4}), says that there exist $\gamma>p$ and $M>0$ such that
\begin{equation}
    0<\gamma F(z,x) \leq f(z,x) x \ \mbox{for almost all}\ z\in\Omega,\ \mbox{and all}\ x\geq M,    \tag{6a}\label{eq6a}
\end{equation}
 \begin{equation}
	0<{\rm ess\,inf}_{\Omega} F(\cdot,M).\tag{6b}\label{eq6b}
\end{equation}
\stepcounter{equation}
If we integrate \eqref{eq6a} and use \eqref{eq6b}, we obtain the weaker condition
\begin{equation}
    \begin{array}{ll}
    &c_9 x^{\gamma} \leq F(z,x)\ \mbox{for almost all}\ z\in\Omega,\ \mbox{all}\ x\geq M,\ \mbox{and some}\ c_9>0, \\
    \Rightarrow &c_9 x^{\gamma-1}\leq f(z,x)\ \mbox{for almost all}\ z\in\Omega,\ \mbox{and all}\ x\geq M.
    \end{array}
    \label{eq7}
\end{equation}

Therefore the AR-condition implies that $f(z,\cdot)$ exhibits at least $(\gamma-1)$-polynomial growth. Evidently, \eqref{eq7} implies the much weaker condition \eqref{eq5}. In this work instead of the standard AR-condition, we employ the less restrictive hypothesis $H(f)(iii)$. In this way we incorporate in our framework also $(p-1)$-superlinear terms with ``slower" growth near $+\infty$, which fail to satisfy the AR-condition. The following function satisfies hypotheses $H(f)$ but fails to satisfy the AR-condition (for the sake of simplicity we drop the $z$-dependence)
$$ f(x) = x^{p-1}\ln(1+x) \ \mbox{for all}\ x\geq 0. $$

Finally, let us fix the notation which we will use throughout this work. For $x\in\RR$ we set $x^{\pm} = \max\{ \pm x,0\}.$ Then for $u\in W^{1,p}_0(\Omega)$ we define $u^{\pm}(z) = u(z)^{\pm}$ for almost all $z\in\Omega$. It follows that
$$ u^{\pm}\in W^{1,p}_0(\Omega),\ u=u^{+}-u^{-},\ |u|=u^{+} + u^{-}. $$

If $u,v\in W^{1,p}_0(\Omega)$ and $u\leq v$, then we define.
$$
\begin{array}{ll}
&[u,v] = \{ y\in W^{1,p}_0(\Omega): u(z) \leq y(z) \leq v(z) \ \mbox{for almost all}\ z\in\Omega \}, \\
&[u) = \{ y\in W^{1,p}_0(\Omega): u(z)\leq y(z)\ \mbox{for almost all}\ z\in\Omega \}.
\end{array}
$$

Also, by ${\rm int}_{c_0^1(\overline{\Omega})}[u,v]$ we denote the interior in the $C_0^1(\overline{\Omega})$-norm topology of the set $[u,v]\cap C_0^1(\overline{\Omega})$.

By $A:W^{1,p}_0(\Omega)\rightarrow W^{-1,p'}(\Omega)=W^{1,p}_0(\Omega)^*\ (\frac{1}{p} + \frac{1}{p'} = 1)$ we denote the nonlinear operator defined by
$$\langle A(u),h\rangle =\int_{\Omega} (a(Du),Dh)_{\RR^{N}} dz \ \mbox{for all}\ u,h\in W^{1,p}_0(\Omega). $$

We know (see Gasinski \& Papageorgiou \cite{4}), that $A(\cdot)$ is continuous, strictly monotone (hence maximal monotone, too) and of type $(S)_+$, that is,
$$
\begin{array}{ll}
``\mbox{if}\ u_n\stackrel{w}{\rightarrow} u \ \mbox{in}\ W^{1,p}_0(\Omega) \ \mbox{and}\ \limsup_{n\to\infty}\langle A(u_n),u_n -u\rangle \leq 0, \\
\mbox{then}\ u_n\rightarrow u \ \mbox{in}\ W^{1,p}_0(\Omega)."
\end{array}
$$

We introduce the following two sets related to problem \eqref{eqp}:
$$
\begin{array}{ll}
    \mathcal{L} = \{ \lambda >0 :\ \mbox{problem \eqref{eqp} admits a positive solution} \},\\
    S_{\lambda} =\ \mbox{the set of positive solutions for problem \eqref{eqp}.}
\end{array}
$$

We let $\lambda^* = \sup\mathcal{L}. $

\section{Positive solutions}\label{sec3}

We start by considering the following purely singular problem:
\begin{equation}
    -{\rm div}\, a(Du(z))) + \xi^+(z)u(z)^{p-1} = \vartheta(u(z))\ \mbox{in}\ \Omega,\ u|_{\partial\Omega} = 0, \   u>0.
    \label{eq8}
\end{equation}

From Papageorgiou, R\u{a}dulescu \& Repov\v{s} \cite[Proposition 10]{16}, we have the following property.
\begin{prop}\label{prop5}
If hypotheses $H(a),H(\xi),H(\vartheta)$ hold, then problem \eqref{eq8} admits a unique positive solution $v\in{\rm int}\,C_+$.
\end{prop}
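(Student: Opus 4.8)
The plan is to obtain existence by approximating the singular term with bounded nonlinearities and passing to the limit, and to obtain uniqueness from the monotonicity structure of the problem. The decisive feature is that the potential is truncated to $\xi^+\geq 0$, so the operator $u\mapsto -{\rm div}\,a(Du)+\xi^+(z)u^{p-1}$ is coercive, which is what makes the direct method of the calculus of variations available despite the singularity.

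\textbf{Existence.} For each $n\geq 1$ I would truncate the singularity from below by setting $\vartheta_n(x)=\vartheta(x)$ for $x\geq 1/n$ and $\vartheta_n(x)=\vartheta(1/n)$ for $x<1/n$; by $H(\vartheta)(ii)$ each $\vartheta_n$ is positive, bounded and nonincreasing. Consider the approximate problem $-{\rm div}\,a(Du)+\xi^+(z)u^{p-1}=\vartheta_n(u^+)$ in $\Omega$, $u|_{\partial\Omega}=0$, and let $\hat\Theta_n$ denote the primitive of $\vartheta_n$. The energy functional
\[
\psi_n(u)=\int_\Omega G(Du)\,dz+\frac1p\int_\Omega \xi^+|u|^p\,dz-\int_\Omega \hat\Theta_n(u^+)\,dz
\]
is $C^1$; by Corollary \ref{cor3} the first term controls $\|u\|^p$ from below, while $\xi^+\geq 0$ and the (at most) linear growth of $\hat\Theta_n$ make $\psi_n$ coercive, and it is sequentially weakly lower semicontinuous. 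Hence $\psi_n$ has a global minimizer $u_n\in W^{1,p}_0(\Omega)$ solving the approximate equation. Testing the weak formulation with $-u_n^-$ and using $\vartheta_n\geq 0$ forces $u_n^-=0$, so $u_n\geq 0$; since $\vartheta_n>0$, the nonlinear regularity theory of Lieberman and the maximum principle of Pucci--Serrin yield $u_n\in{\rm int}\,C_+$.

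\textbf{Uniform bounds, the limit, and the main obstacle.} Standard energy estimates give a uniform bound $\|u_n\|\leq M$. The crux is to prevent the limit from collapsing as the singularity is switched on, i.e. to produce a fixed lower barrier $\underline u\in{\rm int}\,C_+$ with $\underline u\leq u_n$ for all $n$. Using $H(\vartheta)(i)$, namely $\vartheta(x)\geq c_6 x^{-\gamma}$ for $x$ small, I would take $\underline u$ to be the (unique) solution of a purely singular auxiliary problem whose right-hand side of the form $c_6(\,\cdot\,)^{-\gamma}$ minorizes $\vartheta$ near the boundary, and establish $\underline u\leq u_n$ by the same comparison argument used for uniqueness below. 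The resulting estimate $\underline u\geq c\,\hat{d}$ then controls the singular term via $\vartheta_n(u_n^+)\leq\vartheta(\underline u)\in L^p(\Omega)$ uniformly in $n$. With this, $\{A(u_n)\}$ is bounded, $u_n\stackrel{w}{\rightarrow}v$ along a subsequence, $\limsup_{n\to\infty}\langle A(u_n),u_n-v\rangle\leq 0$, and the $(S)_+$-property of $A$ (Lemma \ref{lem2}) gives $u_n\rightarrow v$ in $W^{1,p}_0(\Omega)$. Passing to the limit shows $v$ solves \eqref{eq8}, and $\underline u\leq v$ together with Lieberman regularity gives $v\in C^{1,\alpha}(\overline{\Omega})\cap{\rm int}\,C_+$. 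I expect this uniform lower barrier, which couples the singular growth of $\vartheta$ to the non-homogeneity of $a(\cdot)$, to be the main technical obstacle.

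\textbf{Uniqueness.} Suppose $v_1,v_2\in{\rm int}\,C_+$ both solve \eqref{eq8}. Subtracting the two weak formulations and testing with $(v_1-v_2)^+$ gives
\[
\langle A(v_1)-A(v_2),(v_1-v_2)^+\rangle+\int_\Omega \xi^+\big(v_1^{p-1}-v_2^{p-1}\big)(v_1-v_2)^+\,dz=\int_\Omega\big(\vartheta(v_1)-\vartheta(v_2)\big)(v_1-v_2)^+\,dz .
\]
On $\{v_1>v_2\}$ the strict monotonicity of $a(\cdot)$ and of $x\mapsto x^{p-1}$ (with $\xi^+\geq 0$) make the left-hand side nonnegative, while $\vartheta(v_1)-\vartheta(v_2)\leq 0$ there because $\vartheta$ is nonincreasing, so the right-hand side is nonpositive. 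Both sides therefore vanish, which forces $(v_1-v_2)^+\equiv 0$, i.e. $v_1\leq v_2$; interchanging the roles of $v_1$ and $v_2$ yields $v_1=v_2$. This establishes uniqueness and completes the proof.
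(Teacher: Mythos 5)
The paper does not actually prove Proposition \ref{prop5}; it imports it verbatim from \cite[Proposition 10]{16}, so there is no in-paper argument to compare against line by line. Your strategy (truncate the singularity, minimize the regularized energies, extract a uniform lower barrier, pass to the limit via the $(S)_+$-property, and get uniqueness by testing the difference of the two equations with $(v_1-v_2)^+$ and exploiting that $\vartheta$ is nonincreasing while $a(\cdot)$ and $x\mapsto x^{p-1}$ are strictly monotone) is exactly the standard route taken in \cite{16}, and the uniqueness part as you wrote it is complete and correct.

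The one step that does not work as stated is the comparison $\underline{u}\leq u_n$ for your particular choice of barrier. You take $\underline{u}$ to solve an auxiliary problem with right-hand side behaving like $c_6 x^{-\gamma}$, which minorizes $\vartheta$; but it does \emph{not} minorize the truncations $\vartheta_n$, since on the set where $u_n<1/n$ one has $\vartheta_n(u_n)=\vartheta(1/n)$, which near $\partial\Omega$ can be strictly smaller than $c_6\,\underline{u}^{-\gamma}$. So the weak comparison you invoke (which needs the sub-equation's right-hand side to stay below $\vartheta_n(u_n)$ on $\{\underline{u}>u_n\}$) fails exactly where the barrier is needed. Two standard repairs: (i) note that $\vartheta_n\leq\vartheta_{n+1}$ pointwise, so the same $(\,\cdot\,)^+$-test gives $u_n\leq u_{n+1}$, and $u_1\in{\rm int}\,C_+$ itself is a uniform lower barrier; or (ii) use, as in \eqref{eq29}, the solution $\tilde{u}_\eta$ of the problem with \emph{constant} right-hand side $\eta$, choosing $\eta\leq\vartheta(\delta)$ with $\|\tilde{u}_\eta\|_\infty\leq\delta$, so that $\eta\leq\vartheta_n(\tilde{u}_\eta)$ for all large $n$ and the comparison goes through. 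With either fix the rest of your limit passage (the bound $\vartheta_n(u_n)\leq\vartheta(\underline{u})$, integrability via the Lazer--McKenna lemma, and the $(S)_+$ argument) is sound.
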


Let $\beta > ||\xi||_\infty$. Then hypotheses $H(f)(i),(iv)$ and since $1<q<p<r$, imply that we can find $c_{10},c_{11}>0$ such that
\begin{equation}
    \lambda x^{q-1} + f(z,x) \leq \lambda c_{10} x^{q-1} + c_{11}x^{r-1} -\beta x^{p-1}\ \mbox{for almost all}\ z\in\Omega,\ \mbox{and all}\ x\geq 0.
    \label{eq9}
\end{equation}

Let $k_\lambda(x) = \lambda c_{10} x^{q-1} + c_{11}x^{r-1} -\beta x^{p-1}$ for all $x\geq 0$. With $v\in {\rm int}\,C_+$ from Proposition \ref{prop5}, we consider the following auxiliary Dirichlet problem:
\begin{equation}
\left\{
        \begin{array}{ll}
            -{\rm div}\, a(Du(z)) + \xi(z) u(z)^{p-1} = \vartheta(v(z)) + k_\lambda(u(z))\ \mbox{in}\ \Omega\\
            u|_{\partial\Omega} = 0,\ u>0.
        \end{array}
    \right\}\tag*{$(10)_\lambda$}\label{eq10}
\end{equation}
 \addtocounter{equation}{+1}

For this problem we prove the following result.
\begin{prop}\label{prop6}
    If hypotheses $H(a),H(\xi),H(\vartheta)$ hold, then for all small enough $\lambda>0$ problem \ref{eq10} has a smallest positive solution
    $$ \overline{u}_\lambda \in {\rm int}\,C_+. $$
\end{prop}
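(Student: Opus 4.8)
The plan is to obtain the smallest positive solution of $(10)_\lambda$ by first establishing existence of at least one positive solution via the direct method, and then constructing the minimal one through a lattice/downward-directedness argument on the solution set. The key structural feature to exploit is that the right-hand side of $(10)_\lambda$ is \emph{frozen} in its singular part: $\vartheta(v(z))$ with $v\in{\rm int}\,C_+$ fixed from Proposition \ref{prop5}, so $\vartheta(v(\cdot))\in L^\infty(\Omega)$ and the singularity has been removed from the unknown $u$. This means the associated energy functional
$$ \psi_\lambda(u) = \int_\Omega G(Du)\,dz + \frac{1}{p}\int_\Omega \xi(z)(u^+)^p\,dz - \int_\Omega \vartheta(v(z))\,u^+\,dz - \int_\Omega K_\lambda(u^+)\,dz, $$
where $K_\lambda(x)=\int_0^x k_\lambda(s)\,ds$, is $C^1$ on $W^{1,p}_0(\Omega)$ (no singular obstruction to differentiability) and bounded below.

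First I would verify coercivity of $\psi_\lambda$. Using Corollary \ref{cor3} the leading term dominates like $\tfrac{c_1}{p(p-1)}\|u\|^p$; the indefinite potential term is controlled since $\beta>\|\xi\|_\infty$ was chosen precisely so that the $-\beta x^{p-1}$ inside $k_\lambda$ absorbs the $\xi$-contribution, leaving a net negative $p$-th order coefficient on $u^+$; the remaining terms in $K_\lambda$ grow like $\lambda c_{10}x^q/q$ and $c_{11}x^r/r$, and the $L^\infty$ linear term $\vartheta(v)u^+$ is lower order. Because $q<p<r$, the $r$-th power term is the only threat to coercivity, but for \emph{small} $\lambda$ one shows $\psi_\lambda$ is coercive on a suitable sublevel region and bounded below — this is exactly where the smallness of $\lambda$ enters. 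The direct method then yields a global minimizer $\overline{u}_\lambda$; testing $\psi_\lambda'(\overline{u}_\lambda)=0$ with $-\overline{u}_\lambda^-$ gives $\overline{u}_\lambda\geq 0$, and since $\psi_\lambda(\overline{u}_\lambda)<0$ for appropriate $\lambda$ (using the concave $q$-term to push the energy below zero on a ray $t\hat{u}_1$) we get $\overline{u}_\lambda\neq 0$. Nonlinear regularity (Lieberman) and the nonlinear maximum principle (Pucci--Serrin) upgrade the solution to ${\rm int}\,C_+$.

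The main obstacle is constructing the \emph{smallest} solution. I would show that $S_\lambda^{(10)}$, the solution set of $(10)_\lambda$, is downward directed: given $u_1,u_2$ solutions, $\min\{u_1,u_2\}$ is a supersolution, and by a truncation-plus-minimization argument one produces a solution below both. To extract the infimum I would take a decreasing sequence $u_n$ from $S_\lambda^{(10)}$ with $u_n\downarrow \overline{u}_\lambda=\inf S_\lambda^{(10)}$; the uniform $L^\infty$ bound (from a lower solution controlling $\vartheta(v)$ and from the comparison with $v$) together with the $(S)_+$ property of $A$ gives $u_n\to\overline{u}_\lambda$ in $W^{1,p}_0(\Omega)$, so $\overline{u}_\lambda$ solves $(10)_\lambda$. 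The delicate point is ensuring the limit stays strictly positive and does not collapse to $0$; here one needs a \emph{uniform} lower barrier independent of $n$, which is supplied by comparing each $u_n$ against a fixed subsolution built from $\vartheta(v)$ and the strong comparison Proposition \ref{prop4}. The separability of $C_0^1(\overline{\Omega})$ lets one choose the minimizing sequence to be decreasing, and Proposition \ref{prop1} guarantees the order-boundedness needed to pass to the limit.
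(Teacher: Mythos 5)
Your overall architecture coincides with the paper's: direct minimization of the ``frozen-singularity'' functional, a regularity upgrade to ${\rm int}\,C_+$, downward directedness plus a decreasing sequence and the $(S)_+$ property of $A$ to extract the smallest solution, and comparison with $v$ to keep the limit away from zero. However, two of your load-bearing claims are false as stated. First, $\vartheta(v(\cdot))\notin L^\infty(\Omega)$: since $v\in{\rm int}\,C_+$ vanishes on $\partial\Omega$ and $\vartheta(x)$ behaves like $x^{-\gamma}$ near $0$ (hypothesis $H(\vartheta)(i)$), the function $\vartheta(v)$ blows up like $d(\cdot,\partial\Omega)^{-\gamma}$ at the boundary. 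What is true, and what the paper proves via the Lazer--McKenna lemma ($\hat u_1^{-\gamma/s}\in L^s(\Omega)$) together with $\hat u_1^{1/s}\le\mu v$, is that $\vartheta(v)\in L^s(\Omega)$ for $s>N$. That still makes the functional $C^1$, but it invalidates a direct appeal to Lieberman regularity with a bounded right-hand side; the paper must first route the $L^s$ datum through the auxiliary linear problem $-\Delta y_0=\vartheta(v)$, $y_0\in W^{2,s}(\Omega)\hookrightarrow C^{1,\alpha}(\overline\Omega)$, and absorb $\eta_0=Dy_0$ into the divergence before applying Lieberman's theory. Second, $\psi_\lambda$ is \emph{not} coercive and is unbounded below: the term $-\frac{c_{11}}{r}\|u^+\|_r^r$ with $r>p$ sends it to $-\infty$ along rays, so there is no global minimizer to be had from the direct method. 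The smallness of $\lambda$ enters not through coercivity but through the geometry: one shows $\psi_\lambda|_{\partial\overline B_\rho}>0$ for $\rho=t_0=\left[\lambda(p-1)/(r-p)\right]^{1/(r-1)}$ by minimizing $\Im_\lambda(t)=\lambda t^{1-p}+t^{r-p}$ (whose minimum value tends to $0$ as $\lambda\to 0^+$), and then minimizes over the weakly compact ball $\overline B_\rho$, obtaining an interior, hence critical, local minimizer.

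Two smaller points. Pushing the energy below zero on a ray $tu$ is not merely a consequence of the concave $q$-term: you need hypothesis $H(a)(iv)$, i.e., $G(y)\le\frac{c_0^*}{\tau}|y|^\tau$ near the origin with $q<\tau\le p$, so that the gradient contribution is of order $t^\tau$ and is beaten by $t^q$; the growth \eqref{eq2} alone permits an $|y|^s$-type behaviour of $G$ that could defeat this. And the uniform lower barrier for the decreasing sequence is $v$ itself, obtained from the \emph{weak} comparison principle of Damascelli because each $\overline u_n$ is a supersolution of the purely singular problem \eqref{eq8} solved by $v$; the strong comparison Proposition \ref{prop4} is neither needed nor directly applicable there, since it requires a uniform positive gap between the two right-hand sides.
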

\begin{proof}
Recall that $v\in {\rm int}\,C_+$ (see Proposition \ref{prop5}). Hence $v\in {\rm int}\,K_+$ (see \eqref{eq1}). For $s>N$ we consider the function $\hat{u}^{1/s}_1\in K_+$. According to Proposition \ref{prop1}, we can find $\mu>0$ such that
\begin{equation}
    \begin{array}{ll}
        &\hat{u}^{1/s}_1\leq \mu v,\\
        \Rightarrow &v^{-\gamma}\leq \mu^{\gamma} \hat{u}_1^{-\gamma/s} .
    \end{array}
    \label{eq11}
\end{equation}

From the Lemma in Lazer \& McKenna \cite[p. 726]{9}, we have
\begin{equation}
    \begin{array}{ll}
        &\hat{u}_1^{-\gamma/s} \in L^s (\Omega),\\
        \Rightarrow&v^{-\gamma}\in L^s(\Omega)\ (\mbox{see}\ \eqref{eq11}).
    \end{array}
    \label{eq12}
\end{equation}

Hypotheses $H(\vartheta)$ imply that we can find $c_{12}>0$ and $\delta>0$ such that
\begin{equation}
    0\leq\vartheta(x)\leq c_{12}x^{-\gamma} \ \mbox{for all}\ 0\leq x\leq\delta\ \mbox{and}\ 0\leq\vartheta(x)  \leq\vartheta(\delta)\ \mbox{for all}\ x>\delta.
    \label{eq13}
\end{equation}

It follows from \eqref{eq12}, \eqref{eq13} that
$$ \vartheta (v(\cdot))\in L^s(\Omega)\ (s>N).$$

Let $\hat{k}_\lambda(x) = \lambda c_{10} x^{q-1} + c_{11}x^{r-1}$ for all $x\geq 0$ and set $\hat{K}_\lambda(x) = \int_{0}^{x} \hat{k}_\lambda(s)ds$. We consider the $C^1$- functional $\psi_\lambda: W^{1,p}_0(\Omega)\rightarrow \RR$ defined by
\begin{eqnarray}
    &&\psi_\lambda(u) = \int_{\Omega} G(Du)dz+\frac{1}{p}\int_{\Omega}[\xi(z)+\beta]|u|^p dz-\int_{\Omega}\hat{K}_\lambda(u^+)dz - \int_{\Omega}\vartheta(v) u^+ dz \nonumber\\
    &&\mbox{for all}\ u\in W^{1,p}_0(\Omega)\nonumber\\
    &&\geq\frac{c_1}{p(p-1)}||Du||_p^p +\frac{1}{p}\int_{\Omega} [\xi(z)+\beta]|u|^p dz - \frac{\lambda c_{10}}{q}||u||_q^q - \frac{c_{11}}{r}||u||_r^r - \int_{\Omega}\vartheta(v)|u|dz  \nonumber\\
    &&\mbox{ (see Corollary \ref{cor3})}\nonumber\\
    &&\geq c_{12}||u||^p -c_{13}[\lambda||u||+||u||^r]\nonumber\\
    &&\mbox{for some}\ c_{12},c_{13}>0\ \mbox{and all}\ 0<\lambda\leq 1\ (\mbox{recall that}\ \beta>||\xi||_\infty \ \mbox{and}\ 1<q<r)\nonumber\\
    &&=[c_{12} - c_{13}(\lambda||u||^{1-p}+||u||^{r-p})]||u||^p.
    \label{eq14}
\end{eqnarray}

We introduce the function $\Im_\lambda(t)=\lambda t^{1-p} + t^{r-p},t>0. $ Evidently, $\Im_\lambda \in C^{1}(0,+\infty)$ and since $1<p<r$, we see that
$$ \Im_\lambda(t)\rightarrow+\infty \ \mbox{as}\ t\rightarrow 0^+ \ \mbox{and as}\ t\rightarrow+\infty.$$
So, we can find $t_0>0$ such that
$$
\begin{array}{ll}
    &\Im_\lambda(t_0)=\inf\{\Im_\lambda(t):t>0\},\\
    \Rightarrow&\Im_\lambda'(t_0) = 0,\\
    \Rightarrow&\lambda(p-1)t^{-p}_0 = (r-p) t^{r-p-1}_0\\
    \Rightarrow&t_0 = \left[ \frac{\lambda(p-1)}{r-p} \right]^{\frac{1}{r-1}}.
\end{array}
$$

Since $\frac{p-1}{r-1}<1$, it follows that
$$ \Im_\lambda(t_0)\rightarrow 0\ \mbox{as}\ \lambda\rightarrow 0^+ .$$

So, we can find $\lambda_0\in(0,1]$ such that
$$ \Im_\lambda(t_0)\leq \frac{c_{12}}{c_{13}} \ \mbox{for all}\ \lambda\in(0,\lambda_0]. $$

For $ \rho=t_0 $, we see from \eqref{eq14} that
\begin{equation}
    \psi_\lambda|_{\partial\overline{B}_\rho}>0,
    \label{eq15}
\end{equation}
where $\overline{B}_{\rho} = \{u\in W^{1,p}_0(\Omega):||u||\leq\rho \}$ and $\partial\overline{B}_\rho = \{u\in W^{1,p}_0 (\Omega):||u||=\rho\}$.

We fix $\lambda\in(0,\lambda_0]$. Hypothesis $H(a)(iv)$ implies that we can find $c^*_0 > c^*$ and $\delta>0$ such that
$$ G(y)\leq \frac{c_0^*}{\tau}|y|^{\tau}\ \mbox{for all}\ |y|\leq\delta. $$

Let $u\in {\rm int}\, C_+$ and choose small enough $t\in(0,1)$ such that
$$ t|Du(z)|\leq\delta\ \mbox{for all}\ z\in\overline{\Omega}.  $$

Then we have
$$ \psi_\lambda(tu)\leq\frac{t^{\tau}c_0^*}{\tau}||Du||_\tau^\tau + \frac{t^p}{p}\int_{\Omega}[\xi(z)+\beta]|u|^p dz- \frac{\lambda t^q}{q}||u||_q^q\,. $$

Since $q<\tau\leq p$, choosing $t\in(0,1)$\ even smaller if necessary, we have
\begin{eqnarray}
    &\psi_\lambda(tu)<0,\nonumber\\
    \Rightarrow&\inf_{\overline{B}_\rho}\psi_\lambda<0.
    \label{eq16}
\end{eqnarray}

The functional $\psi_\lambda(\cdot)$ is sequentially weakly lower semicontinuous and by the Eberlein-Smulian theorem and the reflexivity of $W^{1,p}_0(\Omega)$, the set $\overline{B}_\rho$ is sequentially weakly compact. So, by the Weierstrass-Tonelli theorem, we can find $\overline{u}\in W^{1,p}_0(\Omega)$ such that
\begin{equation}
    \psi_\lambda(\overline{u}) = \inf\{\psi_\lambda(u): u\in W^{1,p}_0(\Omega)\}\ \ (\lambda\in(0,\lambda_0]).
    \label{eq17}
\end{equation}

From \eqref{eq15}, \eqref{eq16} and \eqref{eq17} it follows that
\begin{eqnarray}
    &&0<||\overline{u}||<\rho,\nonumber\\
    &&\Rightarrow \psi'_\lambda (\overline{u}) = 0\ (\mbox{see}\ \eqref{eq17}),\nonumber\\
    &&\Rightarrow \langle A(\overline{u}),h\rangle + \int_{\Omega}[\xi(z)+\beta]|\overline{u}|^{p-2}\overline{u}hdz = \int_{\Omega} [\vartheta(v)+\hat{k}_\lambda(\overline{u}^+)] hdz \label{eq18}\\
    &&\mbox{for all}\ h\in W^{1,p}_0(\Omega).\nonumber
\end{eqnarray}

In \eqref{eq18} we choose $h=-\overline{u}^{-}\in W^{1,p}_0(\Omega).$ Using Lemma \ref{lem2}$(c)$ and since $\beta>||\xi||_\infty$ we obtain
\begin{eqnarray*}
    &&c_{14}||\overline{u}^{-}||^p \leq 0\ \mbox{for some}\ c_{14}>0,\nonumber\\
    &&\Rightarrow \overline{u}\geq 0,\ \overline{u}\neq 0.\nonumber
\end{eqnarray*}

Then from \eqref{eq18} we have
\begin{eqnarray}
&&-{\rm div}\, a(D\overline{u}(z)) + \xi(z)\overline{u}(z)^{p-1} = \vartheta(v(z)) + k_\lambda(\overline{u}(z)) \ \mbox{for almost all}\ z\in\Omega,  \label{eq19}\\
&&\Rightarrow \overline{u}\in W^{1,p}_0 (\Omega)\ \mbox{is a positive of problem \ref{eq10} for}\ \lambda\in(0,\lambda_0].\nonumber
\end{eqnarray}

From \eqref{eq19} and Theorem 7.1 of Ladyzhenskaya \& Uraltseva \cite[p. 286]{8}, we have $\overline{u}\in L^{\infty}(\Omega)$. Hence $k_\lambda(\overline{u}(\cdot))\in L^\infty(\Omega)$. Recall that $\vartheta(v(\cdot))\in L^s(\Omega)$ with $s>N$. From Theorem 9.15 of Gilbarg \& Trudinger \cite[p. 241]{6}, we know that there exists a unique solution $y_0\in W^{2,s}(\Omega)$ to the following linear Dirichlet problem
$$ -\Delta y(z) = \vartheta(v(z))\ \mbox{in}\ \Omega ,\ y|_{\partial\Omega} =0 .$$

By the Sobolev embedding theorem, we have
$$ W^{2,s}(\Omega) \hookrightarrow C^{1,\alpha}(\overline{\Omega})\ \mbox{with}\	\alpha = 1-\frac{N}{s}>0. $$

Let $\eta_0(z)=Dy_0(z).$ Then $\eta_0\in C^{\alpha}(\overline{\Omega},\RR^{N})$ and we have
$$ -{\rm div}\,(a(D\overline{u}(z))-\eta_0(z)) + \xi(z)\overline{u}(z)^{p-1} = k_\lambda(\overline{u}(z))\ \mbox{for almost all}\ z\in\Omega. $$

The regularity theory of Lieberman \cite{10} implies that $\overline{u}\in C_+ \backslash \{ 0 \}$. Moreover, from \eqref{eq19} we have
\begin{eqnarray*}
	&&{\rm div}\, a(D\overline{u}(z))\leq ||\xi||_\infty \overline{u}(z)^{p-1}\ \mbox{for almost all}\ z\in\Omega,\nonumber\\
	\Rightarrow&&\overline{u}\in{\rm int}\,C_+\nonumber
\end{eqnarray*}
(from the nonlinear maximum principle, see Pucci \& Serrin \cite[pp. 111,120]{21}).

Let $\hat{S_\lambda}$ denote the set of positive solutions of problem \ref{eq10}. We have just seen that $\emptyset\neq \hat{S_\lambda}\subseteq {\rm int}\,C_+$ for $\lambda\in(0,\lambda_0].$ Moreover, from Papageorgiou, R\u{a}dulescu \& Repov\v{s} \cite[Proposition 18]{16}, we know that $\hat{S_\lambda}$ is downward directed (that is, if $u_1,u_2,\in \hat{S_\lambda}$, then we can find $u\in\hat{S_\lambda}$ such that $u\leq u_1,u\leq u_2)$. So, by Lemma 3.10 of Hu \& Papageorgiou \cite[p. 178]{7}, we can find a decreasing sequence $\{ \overline{u}_n \}_{n\geq1} \subseteq \hat{S_\lambda}$ such that
$$ \inf\hat{S_\lambda} = \inf_{n\geq1}\overline{u}_n. $$

For every $n\in\NN$ we have
\begin{equation}
	\langle A(\overline{u}_n),h\rangle + \int_\Omega\xi(z)\overline{u}_n^{p-1} hdz = \int_{\Omega}[\vartheta(v)+ k_\lambda(\overline{u}_n)] hdz\ \mbox{for all}\ h\in W^{1,p}_0 (\Omega).
	\label{eq20}
\end{equation}

Choosing $h = \overline{u}_n \in W^{1,p}_0$ and since $0\leq\overline{u}_n\leq\overline{u}_1$ for all $n\in\NN$, using Lemma 2(c), we see that $\{\overline{u}_n\}_{n\geq 1} \subseteq W^{1,p}_0(\Omega)$ is bounded. So, we have
\begin{equation}
	\overline{u}_n \stackrel{w}{\rightarrow} \overline{u}_\lambda\ \mbox{in}\ W^{1,p}_0(\Omega).
	\label{eq21}
\end{equation}

Next, in \eqref{eq20} we choose $h=\overline{u}_n-\overline{u}\in W^{1,p}_0(\Omega)$, pass to the limit as $n\rightarrow\infty$ and use \eqref{eq21}. Then
\begin{eqnarray}
	&&\lim_{n\to\infty} \langle A(\overline{u}_n),\overline{u}_n-\overline{u}_\lambda\rangle = 0,\nonumber\\
	\Rightarrow&& \overline{u}_n\rightarrow \overline{u}_\lambda\ \mbox{in}\ W^{1,p}_0(\Omega),\ \overline{u}_\lambda \geq 0 	\label{eq22} \\
	&&\mbox{(recall that }\ A(\cdot)\ \mbox{is of type}\ (S)_+,\ \mbox{see Section \ref{sec2}}).\nonumber
\end{eqnarray}

We pass to the limit as $n\rightarrow\infty$ in \eqref{eq20} and use \eqref{eq22}. Then
\begin{eqnarray*}
	&&\langle A(\overline{u}_\lambda), h\rangle + \int_{\Omega}\xi(z)\overline{u}_\lambda^{p-1} hdz = \int_{\Omega}[\vartheta(v)+ k_\lambda(\overline{u}_\lambda) ] hdz\nonumber\\
	&&\mbox{for all}\ h\in W^{1,p}_0 (\Omega),\nonumber\\
	\Rightarrow&&\overline{u}_\lambda\ \mbox{is a nonnegative solution of \ref{eq10}}.\nonumber
\end{eqnarray*}

Note that for all $n\in\NN$, we have
\begin{equation*}
	\begin{array}{ll}
		-{\rm div}\, a(D\overline{u}_n(z)) + \xi^+(z)\overline{u}_n(z)^{p-1} & \geq \vartheta (v(z)) + k_\lambda (\overline{u}_n (z)) \\
		&\geq \vartheta(v(z)) = -{\rm div}\, a(Dv(z)) + \xi^{+}(z) v (z)^{p-1}\\ &\mbox{for almost all}\ z\in \Omega,\\
		\Rightarrow v\leq \overline{u}_n\ \mbox{for all}\ n\in\NN
	\end{array}
\end{equation*}
(by the weak comparison principle, see Damascelli \cite[Theorem 1.2]{3})
\begin{equation}
	\Rightarrow v\leq \overline{u}_\lambda \ \mbox{(see \eqref{eq22}), hence}\ \overline{u}_\lambda\neq 0.
	\label{eq23}
\end{equation}

Therefore $\overline{u}_\lambda \in \hat{S_\lambda}\subseteq {\rm int}\,C_+$ and $\overline{u}_\lambda = \inf\hat{S_\lambda}$.
\end{proof}

We will use $\bar{u}_\lambda\in{\rm int}\,C_+$ from Proposition \ref{prop6} to show the nonemptiness of $\mathcal{L}$.
\begin{prop}\label{prop7}
	If hypotheses $H(a),H(\xi),H(\vartheta),H(f)$ hold, then $\mathcal{L}\neq\emptyset$ and $S_\lambda\subseteq{\rm int}\,C_+$.
\end{prop}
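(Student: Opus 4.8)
The plan is to show that $\mathcal{L} \neq \emptyset$ by producing, for all sufficiently small $\lambda > 0$, a positive solution of the original problem \eqref{eqp}, and separately to establish the regularity inclusion $S_\lambda \subseteq {\rm int}\,C_+$. For the first part, the natural strategy is to use the smallest solution $\overline{u}_\lambda \in {\rm int}\,C_+$ of the auxiliary problem \ref{eq10} (from Proposition \ref{prop6}) as a lower obstacle and to run a truncation-and-minimization argument. I would first observe that since $v \leq \overline{u}_\lambda$ (see \eqref{eq23}) and $\vartheta(\cdot)$ is nonincreasing (hypothesis $H(\vartheta)(ii)$), we have $\vartheta(\overline{u}_\lambda) \leq \vartheta(v)$, and moreover by \eqref{eq9} the reaction of \eqref{eqp} is dominated by $\vartheta(v) + k_\lambda(\cdot)$ wherever $u \leq \overline{u}_\lambda$. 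This makes $\overline{u}_\lambda$ a natural candidate for an upper barrier: I expect $\overline{u}_\lambda$ to be a supersolution of \eqref{eqp} once the singular term is properly handled.

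Concretely, I would introduce the truncation of the reaction at the level $\overline{u}_\lambda$, replacing $f(z,x)$ and $\lambda x^{q-1}$ by their values at $\min\{x^+, \overline{u}_\lambda(z)\}$, and likewise freezing the singular term as $\vartheta(v(z))$ (so that the energy functional is well defined, using $\vartheta(v(\cdot)) \in L^s(\Omega)$ established in the proof of Proposition \ref{prop6}). The corresponding truncated energy functional is coercive and sequentially weakly lower semicontinuous on $W^{1,p}_0(\Omega)$, so it attains a global minimizer $u_\lambda$. Testing the Euler equation with $-u_\lambda^-$ and with $(u_\lambda - \overline{u}_\lambda)^+$ (and using the weak comparison principle as in the proof of Proposition \ref{prop6}, together with $v \leq \overline{u}_\lambda$) should yield $v \leq u_\lambda \leq \overline{u}_\lambda$, which deactivates the truncations and shows $u_\lambda$ solves \eqref{eqp}. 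Since $v \in {\rm int}\,C_+$ and $v \leq u_\lambda$, we get $u_\lambda \neq 0$, so $\lambda \in \mathcal{L}$ for all $\lambda \in (0,\lambda_0]$, proving $\mathcal{L} \neq \emptyset$.

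For the inclusion $S_\lambda \subseteq {\rm int}\,C_+$, I would take an arbitrary $u \in S_\lambda$ and argue exactly as in the regularity bootstrap at the end of the proof of Proposition \ref{prop6}. First, any positive solution $u$ satisfies $\vartheta(u(\cdot)) \in L^s(\Omega)$ for some $s > N$: this requires a lower bound of the form $c\hat{d} \leq u$ so that the Lazer--McKenna estimate applies, which follows from the weak comparison with $v$ (comparing \eqref{eqp} against \eqref{eq8} using that the extra reaction terms $\lambda u^{q-1} + f(z,u) \geq 0$ and $\xi \leq \xi^+$ give $v \leq u$). Then Theorem 7.1 of Ladyzhenskaya \& Uraltseva gives $u \in L^\infty(\Omega)$, the absorption of $\vartheta(u(\cdot))$ via the auxiliary linear Dirichlet problem $-\Delta y = \vartheta(u)$ with $y \in W^{2,s} \hookrightarrow C^{1,\alpha}$ reduces to a regular problem, Lieberman's regularity theory yields $u \in C_+ \setminus \{0\}$, and finally the nonlinear maximum principle of Pucci \& Serrin gives $u \in {\rm int}\,C_+$.

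The main obstacle I anticipate is the correct handling of the singular term $\vartheta(u)$ throughout: the energy functional associated directly to \eqref{eqp} is not $C^1$ (indeed not even finite) because of the singularity, so the minimization must be performed on a version with the singular term frozen as $\vartheta(v)$, and one must verify a posteriori, using the order relation $v \leq u_\lambda \leq \overline{u}_\lambda$ and the monotonicity of $\vartheta$, that the frozen solution genuinely solves the original singular equation. Establishing the two-sided bounds cleanly — in particular the upper bound $u_\lambda \leq \overline{u}_\lambda$ via the weak comparison principle while the singular terms on both sides differ — is the delicate point; everything else is a standard truncation and regularity routine.
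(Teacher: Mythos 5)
Your overall strategy --- use $\overline{u}_\lambda$ from Proposition \ref{prop6} as an upper barrier, $v$ as a lower barrier, truncate the reaction to the order interval $[v,\overline{u}_\lambda]$, minimize the truncated functional and then deactivate the truncation --- is exactly the paper's argument. The one step that does not work as you state it is the treatment of the singular term. You propose to \emph{freeze} it as $\vartheta(v(z))$, i.e.\ to make it independent of $x$ in the truncated reaction, and then to ``verify a posteriori'' that the minimizer solves the original equation. That verification fails: if $u_\lambda$ is the minimizer, then after removing the other truncations it satisfies $-{\rm div}\,a(Du_\lambda)+\xi(z)u_\lambda^{p-1}=\vartheta(v)+\lambda u_\lambda^{q-1}+f(z,u_\lambda)$, and the order relation $v\leq u_\lambda$ together with $H(\vartheta)(ii)$ gives only the one-sided inequality $\vartheta(u_\lambda)\leq\vartheta(v)$. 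So $u_\lambda$ is merely a supersolution of \eqref{eqp}, not a solution, and $\lambda\in\mathcal{L}$ does not follow. The correct move (see \eqref{eq26}) is to truncate $\vartheta$ rather than freeze it: use $\vartheta(v(z))$ for $x<v(z)$, $\vartheta(x)$ for $v(z)\leq x\leq\overline{u}_\lambda(z)$, and $\vartheta(\overline{u}_\lambda(z))$ for $x>\overline{u}_\lambda(z)$. Since $v(z)>0$ on $\Omega$, this truncated reaction is Carath\'eodory and is dominated by $\vartheta(v(\cdot))\in L^s(\Omega)$, so the functional is still $C^1$ and coercive; and once $u_\lambda\in[v,\overline{u}_\lambda]$ is established, the middle branch is active and $u_\lambda$ genuinely solves \eqref{eqp}.

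Two smaller remarks. First, the upper bound $u_\lambda\leq\overline{u}_\lambda$ does not need Damascelli's weak comparison principle: testing the Euler equation with $(u_\lambda-\overline{u}_\lambda)^+$ and subtracting the supersolution inequality \eqref{eq25} for $\overline{u}_\lambda$, the strict monotonicity of $w\mapsto A(w)+(\xi+\beta)|w|^{p-2}w$ (recall $\beta>||\xi||_\infty$) forces $(u_\lambda-\overline{u}_\lambda)^+=0$. Second, for $S_\lambda\subseteq{\rm int}\,C_+$ your claim that $v\leq u$ for an arbitrary $u\in S_\lambda$ follows ``from the weak comparison with \eqref{eq8}'' is circular: comparing the right-hand sides would require $\vartheta(v)\leq\vartheta(u)+\cdots$, which goes the wrong way precisely because $\vartheta$ is decreasing; this is why the paper proves $v\leq u$ separately (Proposition \ref{prop8}) via the barrier $\tilde{u}_\eta$ of \eqref{eq29} and the uniqueness of $v$. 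For the regularity bootstrap one only needs a lower bound of the form $c\hat{d}\leq u$, which \eqref{eq29} provides, to get $\vartheta(u(\cdot))\in L^s(\Omega)$; the rest of your argument (Ladyzhenskaya--Uraltseva, the auxiliary linear Dirichlet problem, Lieberman, Pucci--Serrin) is exactly the paper's.
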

\begin{proof}
	From (\ref{eq9}) we have
	\begin{equation}\label{eq24}
		\lambda x^{q-1}+f(z,x)\leq k_\lambda(x)\ \mbox{for almost all}\ z\in\Omega,\mbox{ and all }x\geq 0,\,  \lambda>0.
	\end{equation}
	
	For $\lambda\in\left(0,\lambda_0\right]$ we have
	\begin{eqnarray}\label{eq25}
		-{\rm div}\,a(D\bar{u}_\lambda(z))+\xi(z)\bar{u}_\lambda(z)^{p-1}&=&\vartheta(v(z))+k_\lambda(\bar{u}_\lambda(z))\nonumber\\
		&&(\mbox{see Proposition \ref{prop6}})\nonumber\\
		&\geq&\vartheta(\bar{u}_\lambda(z))+k_\lambda(\bar{u}(z))\nonumber\\
		&&(\mbox{see (\ref{eq23}) and hypothesis}\ H(\vartheta)(ii))\nonumber\\
		&\geq&\vartheta(\bar{u}_\lambda(z))+\lambda\bar{u}_\lambda(z)^{q-1}+f(z,\bar{u}_\lambda(z))\\
		&&\mbox{for almost all}\ z\in\Omega\ (\mbox{see (\ref{eq24})}).\nonumber
	\end{eqnarray}
	
	With $\beta>||\xi||_\infty$ and $\lambda\in\left(0,\lambda_0\right]$, we consider the following truncation-perturbation of the reaction in problem \eqref{eqp}:
	\begin{eqnarray}\label{eq26}
		&&\gamma_\lambda(z,x)=\left\{\begin{array}{ll}
			\vartheta(v(z))+\lambda v(z)^{q-1}+f(z,v(z))+\beta v(z)^{p-1}&\mbox{if}\ x<v(z)\\
			\vartheta(x)+\lambda x^{q-1}+f(z,x)+\beta x^{p-1}&\mbox{if}\ v(z)\leq x\leq \bar{u}_\lambda(z)\\
			\vartheta(\bar{u}_\lambda(z))+\lambda\bar{u}_\lambda(z)^{q-1}+f(z,\bar{u}_\lambda(z))+\beta\bar{u}_\lambda(z)^{p-1}&\mbox{if}\ \bar{u}_\lambda(z)<x.
		\end{array}\right.
	\end{eqnarray}
	
	This is a Carath\'eodory function. We set $\Gamma_\lambda(z,x)=\int^x_0\gamma_\lambda(z,s)ds$ and consider the functional $\hat{\sigma}_\lambda:W^{1,p}_0(\Omega)\rightarrow\RR$ defined by
	\begin{eqnarray*}
		&&\hat{\sigma}_\lambda(u)=\int_\Omega G(Du)dz+\frac{1}{p}\int_\Omega[\xi(z)+\beta]|u|^pdz-\int_\Omega\Gamma_\lambda(z,u)dz\\
		&&\mbox{for all}\ u\in W^{1,p}_0(\Omega).
	\end{eqnarray*}
	
	Using Proposition 3 of Papageorgiou \& Smyrlis \cite{17}, we see that $\hat{\sigma}_\lambda\in C^1(W^{1,p}_0(\Omega))$. Also, from (\ref{eq26}), Corollary \ref{cor3} and since $\beta>||\xi||_\infty$, we see that $\hat{\sigma}_\lambda(\cdot)$ is coercive. In addition, it is sequentially weakly lower semicontinuous. So, we can find $u_\lambda\in W^{1,p}_0(\Omega)$ such that
	\begin{eqnarray}\label{eq27}
		&&\hat{\sigma}_\lambda(u_\lambda)=\inf\{\hat{\sigma}(u):u\in W^{1,p}_0(\Omega)\},\nonumber\\
		&\Rightarrow&\hat{\sigma}'_\lambda(u_\lambda)=0,\nonumber\\
		&\Rightarrow&\left\langle A(u_\lambda,h)\right\rangle+\int_\Omega[\xi(z)+\beta]|u_\lambda|^{p-2}u_\lambda hdz=\int_\Omega \gamma_\lambda(z,u_\lambda)hdz\\
		&&\mbox{for all}\ h\in W^{1,p}_0(\Omega).\nonumber
	\end{eqnarray}
	
	In (\ref{eq27}) first we choose $h=(u_\lambda-\bar{u}_\lambda)^+\in W^{1,p}_0(\Omega)$. Then we have
	\begin{eqnarray*}
		&&\left\langle A(u_\lambda),(u_\lambda-\bar{u}_\lambda)^+\right\rangle+\int_\Omega[\xi(z)+\beta]u_\lambda^{p-1}(u_\lambda-\bar{u}_\lambda)^+dz\\
		&=&\int_\Omega[\vartheta(\bar{u}_\lambda)+\lambda\bar{u}_\lambda^{q-1}+f(z,\bar{u}_\lambda)+\beta\bar{u}_\lambda^{p-1}](u_\lambda-\bar{u}_\lambda)^+dz\ (\mbox{see (\ref{eq26})})\\
		&\leq&\left\langle A(\bar{u}_\lambda),(u_\lambda-\bar{u}_\lambda)^+\right\rangle+\int_\Omega[\xi(z)+\beta]\bar{u}_\lambda^{p-1}(u_\lambda-\bar{u}_\lambda)^+dz\ (\mbox{see (\ref{eq25})}),\\
		&\Rightarrow&u_\lambda\leq\bar{u}_\lambda\ (\mbox{since}\ \beta>||\xi||_\infty).
	\end{eqnarray*}
	
	Next, in (\ref{eq27}) we choose $h=(v-u_\lambda)^+\in W^{1,p}_0(\Omega)$. Then we have
	\begin{eqnarray*}
		&&\left\langle A(u_\lambda),(v-u_\lambda)^+\right\rangle+\int_\Omega[\xi(z)+\beta]|u_\lambda|^{p-2}u_\lambda(v-u_\lambda)^+dz\\
		&=&\int_\Omega[\vartheta(v)+\lambda v^{q-1}+f(z,v)+\beta v^{p-1}](v-u_\lambda)^+dz\ (\mbox{see (\ref{eq26})})\\
		&\geq&\int_\Omega[\vartheta(v)+\beta v^{p-1}](v-u_\lambda)^+dz\ (\mbox{since}\ f\geq 0)\\
		&=&\left\langle A(v),(v-u_\lambda)^+\right\rangle+\int_\Omega[\xi(z)+\beta]v^{p-1}(v-u_\lambda)^+dz\ (\mbox{see Proposition \ref{prop5}}),\\
		&\Rightarrow&v\leq u_\lambda.
	\end{eqnarray*}
	
	So, we have proved that
	\begin{equation}\label{eq28}
		u_\lambda\in[v,\bar{u}_\lambda]\quad (\lambda\in\left(0,\lambda_0\right]).
	\end{equation}
	
It follows	from (\ref{eq26}), (\ref{eq27}) and (\ref{eq28}) that
	\begin{eqnarray*}
		&&-{\rm div}\,a(Du_\lambda(z))+\xi(z)u_\lambda(z)^{p-1}=\vartheta(u_\lambda(z))+\lambda u_\lambda(z)^{q-1}+f(z,u_\lambda(z))\\
		&&\mbox{for almost all}\ z\in\Omega.
	\end{eqnarray*}
	
	Note that $\vartheta_\lambda(u_\lambda)\leq\vartheta(v)$ (see (\ref{eq28}) and hypothesis $H(\vartheta)(ii)$) and $\vartheta(v)\in L^s(\Omega)$. So, as before (see the proof of Proposition \ref{prop6}), we infer that
	$$u_\lambda\in{\rm int}\,C_+\,.$$
	
	Therefore we have seen that
	\begin{eqnarray*}
		&&\left(0,\lambda_0\right]\subseteq\mathcal{L},\ \mbox{hence}\ \mathcal{L}\neq\emptyset\\
		\mbox{and}&&S_\lambda\subseteq{\rm int}\,C_+.
	\end{eqnarray*}
The proof is now complete.
\end{proof}

For $\eta>0$, let $\tilde{u}_{\eta}\in{\rm int}\,C_+$ be the unique solution of the following Dirichlet problem
$$-{\rm div}\,a(Du(z))+\xi^+(z)u(z)^{p-1}=\eta\ \mbox{in}\ \Omega,\ u|_{\partial\Omega}=0.$$

By Proposition 9 of Papageorgiou, R\u{a}dulescu \& Repov\v{s} \cite{16}, we see that given $u\in S_\lambda\subseteq{\rm int}\,C_+$ (that is, $\lambda\in\mathcal{L}$), we can find small $\eta>0$ such that
\begin{equation}\label{eq29}
	\tilde{u}_\eta\leq u\ \mbox{and}\ \eta\leq\vartheta(\tilde{u}_\eta).
\end{equation}

We will use this to obtain a lower bound for the elements of $S_\lambda$.
\begin{prop}\label{prop8}
	If hypotheses $H(a),H(\xi),H(\vartheta),H(f)$ hold and $\lambda\in\mathcal{L}$, then $v\leq u$ for all $u\in S_\lambda$.
\end{prop}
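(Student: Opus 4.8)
The plan is to exhibit every $u\in S_\lambda$ as a supersolution of the purely singular problem \eqref{eq8}, to trap a solution of \eqref{eq8} inside the order interval $[\tilde u_\eta,u]$ by minimizing a truncated functional, and then to invoke the uniqueness in Proposition \ref{prop5} to identify that solution with $v$, whence $v\le u$.

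First I would record the two barriers. Fix $u\in S_\lambda\subseteq{\rm int}\,C_+$. Writing $\xi^+=\xi+\xi^-$ and using $f\ge 0$ together with $\lambda u^{q-1}\ge 0$, the equation satisfied by $u$ gives
\[
-{\rm div}\,a(Du)+\xi^+(z)u^{p-1}=\vartheta(u)+\lambda u^{q-1}+f(z,u)+\xi^-(z)u^{p-1}\ge \vartheta(u)\quad\mbox{a.e. in }\Omega,
\]
so $u$ is a (weak) supersolution of \eqref{eq8}. On the other hand \eqref{eq29} provides $\eta>0$ with $\tilde u_\eta\le u$ and $\eta\le\vartheta(\tilde u_\eta)$; since $-{\rm div}\,a(D\tilde u_\eta)+\xi^+\tilde u_\eta^{p-1}=\eta\le\vartheta(\tilde u_\eta)$, the function $\tilde u_\eta$ is a subsolution of \eqref{eq8} lying below $u$. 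Moreover, as in \eqref{eq12}, since $\tilde u_\eta,u\in{\rm int}\,C_+$ one has $\vartheta(\tilde u_\eta),\vartheta(u)\in L^s(\Omega)$ with $s>N$.

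Next I would truncate the singular reaction between these barriers. Define the Carath\'eodory function
\[
\hat\mu(z,x)=\begin{cases}\vartheta(\tilde u_\eta(z))&\mbox{if }x<\tilde u_\eta(z),\\\vartheta(x)&\mbox{if }\tilde u_\eta(z)\le x\le u(z),\\\vartheta(u(z))&\mbox{if }u(z)<x,\end{cases}
\]
set $\hat M(z,x)=\int_0^x\hat\mu(z,s)\,ds$, and consider $\Phi(w)=\int_\Omega G(Dw)\,dz+\frac1p\int_\Omega[\xi(z)+\beta]|w|^p\,dz-\int_\Omega\hat M(z,w)\,dz$ on $W^{1,p}_0(\Omega)$. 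Because $0\le\hat\mu(z,x)\le\vartheta(\tilde u_\eta(z))\in L^s(\Omega)$, Proposition 3 of Papageorgiou \& Smyrlis \cite{17} yields $\Phi\in C^1(W^{1,p}_0(\Omega))$, while Corollary \ref{cor3} makes $\Phi$ coercive and sequentially weakly lower semicontinuous; hence $\Phi$ has a global minimizer $w_0$. Testing $\Phi'(w_0)=0$ with $(\tilde u_\eta-w_0)^+$, using $\hat\mu(z,w_0)=\vartheta(\tilde u_\eta)\ge\eta$ on $\{w_0<\tilde u_\eta\}$ against the equation for $\tilde u_\eta$ and the monotonicity of $A$, forces $\tilde u_\eta\le w_0$; testing with $(w_0-u)^+$, using $\hat\mu(z,w_0)=\vartheta(u)$ on $\{w_0>u\}$ against the supersolution inequality for $u$, forces $w_0\le u$. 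Thus $w_0\in[\tilde u_\eta,u]$.

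On $[\tilde u_\eta,u]$ the truncation is transparent, i.e. $\hat\mu(z,w_0)=\vartheta(w_0)$, so $w_0$ solves exactly problem \eqref{eq8}. By the uniqueness in Proposition \ref{prop5} we get $w_0=v$, and therefore $v\in[\tilde u_\eta,u]$; in particular $v\le u$. Since $u\in S_\lambda$ was arbitrary, the claim follows. The main obstacle is the singularity of $\vartheta$: it is precisely the lower barrier $\tilde u_\eta$ (and the bound $\vartheta(\tilde u_\eta)\in L^s(\Omega)$) that renders $\Phi$ well defined, $C^1$ and coercive and keeps the minimizer strictly positive, so that the variational machinery and the bracketing argument apply; without such a barrier the term $\int_\Omega\vartheta(w)\,w\,dz$ need not be finite. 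A more direct route---subtracting the weak formulation for $v$ from the supersolution inequality for $u$, testing with $(v-u)^+$, and using that $\vartheta$ is nonincreasing together with the strict monotonicity of $a$---also yields $v\le u$, but it requires the same care with the singular integrals near $\partial\Omega$.
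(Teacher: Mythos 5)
Your strategy is the paper's own: trap a solution of the purely singular problem \eqref{eq8} inside the order interval $[\tilde u_\eta,u]$ by minimizing a truncated functional and then invoke the uniqueness of Proposition \ref{prop5} to identify that solution with $v$. There is, however, a concrete error in the potential term of your functional $\Phi$: you take $\frac1p\int_\Omega[\xi(z)+\beta]|w|^p\,dz$ while truncating only the singular term $\vartheta$, with no compensating $(\beta-\xi^-(z))x^{p-1}$ inside $\hat\mu$. Consequently, even after $w_0\in[\tilde u_\eta,u]$ is established, the Euler--Lagrange equation of $\Phi$ reads $-{\rm div}\,a(Dw_0)+[\xi(z)+\beta]w_0^{p-1}=\vartheta(w_0)$, which is \emph{not} problem \eqref{eq8}: that problem carries the coefficient $\xi^+$, and $\xi(z)+\beta>\xi^+(z)$ everywhere because $\beta>||\xi||_\infty$. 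So the uniqueness in Proposition \ref{prop5} does not apply to $w_0$, and the identification $w_0=v$ --- the step on which the whole conclusion rests --- fails as written.

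The same mismatch already damages the lower trapping. Testing with $(\tilde u_\eta-w_0)^+$ and comparing against $-{\rm div}\,a(D\tilde u_\eta)+\xi^+(z)\tilde u_\eta^{p-1}=\eta\le\vartheta(\tilde u_\eta)$ leaves you with the term $\int_\Omega\left[\xi^+(z)\tilde u_\eta^{p-1}-(\xi(z)+\beta)|w_0|^{p-2}w_0\right](\tilde u_\eta-w_0)^+dz$, whose integrand need not be nonnegative on $\{\tilde u_\eta>w_0\}$ (where $\xi<0$ one has $\xi^+=0$ while $\xi+\beta>0$), so you cannot force $(\tilde u_\eta-w_0)^+=0$. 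The repair is simply to use $\frac1p\int_\Omega\xi^+(z)|w|^p\,dz$ in $\Phi$, exactly as the paper does: coercivity is unaffected since $\xi^+\ge0$ and Corollary \ref{cor3} already controls the gradient term, all three relations being compared then carry the same coefficient $\xi^+$, and on $[\tilde u_\eta,u]$ the minimizer solves \eqref{eq8} exactly, giving $w_0=v\le u$. With that single replacement your argument coincides with the paper's proof; the preliminary observations (the supersolution inequality for $u$, the barrier \eqref{eq29}, and the $L^s$ bound on $\vartheta(\tilde u_\eta)$) are all correct, and your closing ``direct comparison'' remark is only a heuristic, not a substitute argument.
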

\begin{proof}
	Let $u\in S_\lambda\subseteq{\rm int}\,C_+$. Then on account of (\ref{eq29}) we can define the following Carath\'eodory function
	\begin{equation}\label{eq30}
		e(z,x)=\left\{\begin{array}{ll}
			\vartheta(\tilde{u}_\eta(z))&\mbox{if}\ x<\tilde{u}_\eta(z)\\
			\vartheta(x)&\mbox{if}\ \tilde{u}_\eta(z)\leq x\leq u(z)\\
			\vartheta(u(z))&\mbox{if}\ u(z)<x.
		\end{array}\right.
	\end{equation}
	
	We set $E(z,x)=\int^x_0 e(z,s)ds$ and consider the functional $\mu:W^{1,p}_0(\Omega)\rightarrow\RR$ defined by
	$$\mu(u)=\int_\Omega G(Du)dz+\frac{1}{p}\int_\Omega\xi^+(z)|u|^pdz-\int_\Omega E(z,u)dz\ \mbox{for all}\ u\in W^{1,p}_0(\Omega).$$
	
	As before, Proposition 3 of Papageorgiou \& Smyrlis \cite{17} implies that $\mu\in C^1(W^{1,p}_0(\Omega))$. The coercivity of $\mu(\cdot)$ (see (\ref{eq30})) and the sequential weak lower semicontinuity guarantee the existence of $\tilde{v}\in W^{1,p}_0(\Omega)$ such that
	\begin{eqnarray}\label{eq31}
		&&\mu(\tilde{v})=\inf\{\mu(u):u\in W^{1,p}_0(\Omega)\},\nonumber\\
		&\Rightarrow&\mu'(\tilde{v})=0,\nonumber\\
		&\Rightarrow&\left\langle A(\tilde{v}),h\right\rangle+\int_\Omega\xi^+(z)|\tilde{v}|^{p-2}\tilde{v}hdz=\int_\Omega e(z,\tilde{v})hdz\ \mbox{for all}\ h\in W^{1,p}_0(\Omega).
	\end{eqnarray}
	
	In (\ref{eq31}) we choose $h=(\tilde{v}-u)^+\in W^{1,p}_0(\Omega)$. Then we have
	\begin{eqnarray*}
		&&\left\langle A(\tilde{v}),(\tilde{v}-u)^+\right\rangle+\int_\Omega\xi^+(z)\tilde{v}^{p-1}(\tilde{v}-u)^+dz\\
		&=&\int_\Omega\vartheta(u)(\tilde{v}-u)^+dz\ (\mbox{see (\ref{eq30})})\\
		&\leq&\int_\Omega[\vartheta(u)+\lambda u^{q-1}+f(z,u)](\tilde{v}-u)^+dz\ (\mbox{since}\ u\in{\rm int}\,C_+,\ f\geq 0)\\
		&\leq&\left\langle A(u),(\tilde{v}-u)^+\right\rangle+\int_\Omega\xi^+(z)u^{p-1}(\tilde{v}-u)^+dz\ (\mbox{since}\ u\in S_\lambda),\\
		\Rightarrow&&\tilde{v}\leq u.
	\end{eqnarray*}
	
	Similarly, if in (\ref{eq31}) we choose $h=(\tilde{u}_\eta-\tilde{v})^+\in W^{1,p}_0(\Omega)$, then we have
	\begin{eqnarray*}
		&&\left\langle A(\tilde{v}),(\tilde{u}_\eta-\tilde{v})^+\right\rangle+\int_\Omega\xi^+(z)|\tilde{v}|^{p-2}\tilde{v}(\tilde{u}_\eta-\tilde{v})^+dz\\
		&=&\int_\Omega\vartheta(\tilde{u}_\eta)(\tilde{u}_\eta-\tilde{v})^+dz\ (\mbox{see (\ref{eq30})})\\
		&\geq&\int_\Omega\eta(\tilde{u}_\eta-\tilde{v})^+dz\ (\mbox{see (\ref{eq29})})\\
		&=&\left\langle A(\tilde{u}_\eta),(\tilde{u}_\eta-v)^+\right\rangle+\int_\Omega\xi^+(z)\tilde{u}_{\eta}^{p-1}(\tilde{u}_\eta-v)^+dz,\\
		\Rightarrow&&\tilde{u}_\eta\leq\tilde{v}.
	\end{eqnarray*}
	
	So, we have proved that
	\begin{equation}\label{eq32}
		\tilde{v}\in[\tilde{u}_\eta,u].
	\end{equation}
	
	It follows from (\ref{eq30}), (\ref{eq31}), (\ref{eq32}) that $\tilde{v}$ is a positive solution of (\ref{eq18}). Then on account of Proposition \ref{prop5}, we have
	\begin{eqnarray*}
		&&\tilde{v}=v\in{\rm int}\,C_+,\\
		&\Rightarrow&v\leq u\ \mbox{for all}\ u\in S_\lambda\ (\mbox{see (\ref{eq32})}).
	\end{eqnarray*}
The proof is now complete.
\end{proof}

Next, we show a structural property of the set $\mathcal{L}$, namely that $\mathcal{L}$ is an interval. Moreover, we establish a kind of strong monotonicity property for the solution set $S_\lambda$.
\begin{prop}\label{prop9}
	If hypotheses $H(a),H(\xi),H(\vartheta),H(f)$ hold, $\lambda\in\mathcal{L},0<\mu<\lambda$ and $u_\lambda\in S_\lambda\subseteq{\rm int}\,C_+$, then $\mu\in\mathcal{L}$ and there exists $u_\mu\in S_\mu\subseteq{\rm int}\,C_+$ such that $u_\lambda-u_\mu\in{\rm int}\,C_+.$
\end{prop}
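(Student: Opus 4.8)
The plan is to use the given solution $u_\lambda$ as an upper barrier and the singular solution $v$ of Proposition \ref{prop5} as a lower barrier for problem $(P_\mu)$, to produce a solution $u_\mu\in[v,u_\lambda]$ by direct minimization, and finally to upgrade the resulting weak inequality $u_\mu\le u_\lambda$ to the strict relation $u_\lambda-u_\mu\in{\rm int}\,C_+$ through the strong comparison principle of Proposition \ref{prop4}. What makes $u_\lambda$ an upper barrier for $(P_\mu)$ is exactly the strict inequality $\mu<\lambda$: since $u_\lambda\in{\rm int}\,C_+$,
$$-{\rm div}\,a(Du_\lambda)+\xi(z)u_\lambda^{p-1}=\vartheta(u_\lambda)+\lambda u_\lambda^{q-1}+f(z,u_\lambda)\ge\vartheta(u_\lambda)+\mu u_\lambda^{q-1}+f(z,u_\lambda),$$
so $u_\lambda$ is a supersolution of $(P_\mu)$, while Proposition \ref{prop8} gives $v\le u_\lambda$, so that the order interval $[v,u_\lambda]$ is nonempty.

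First I would fix $\beta>||\xi||_\infty$ and introduce a truncation-perturbation $\gamma_\mu(z,x)$ of the reaction exactly as in \eqref{eq26}, but with parameter $\mu$ and with the upper cap at $u_\lambda(z)$ instead of $\bar u_\lambda(z)$. The associated energy functional $\hat\sigma_\mu$ is then $C^1$, coercive (the truncation bounds the reaction and $\beta>||\xi||_\infty$ controls the potential term via Corollary \ref{cor3}) and sequentially weakly lower semicontinuous, so it has a global minimizer $u_\mu$ with $\hat\sigma_\mu'(u_\mu)=0$. Testing this equation with $(u_\mu-u_\lambda)^+$ and with $(v-u_\mu)^+$, and using that $u_\lambda$ is a supersolution and $v$ a subsolution of $(P_\mu)$ (here $f\ge0$ and $H(\vartheta)(ii)$ enter exactly as in the proofs of Propositions \ref{prop6} and \ref{prop7}), yields $u_\mu\in[v,u_\lambda]$. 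On this interval the truncation is inactive, so $u_\mu$ solves $(P_\mu)$; since $\vartheta(u_\mu)\le\vartheta(v)\in L^s(\Omega)$ with $s>N$, the regularity theory of Lieberman \cite{10} and the nonlinear maximum principle of Pucci \& Serrin \cite{21} give $u_\mu\in{\rm int}\,C_+$. Hence $\mu\in\mathcal{L}$ and $u_\mu\le u_\lambda$.

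For the strict ordering I would invoke Proposition \ref{prop4}. Put $\rho=||u_\lambda||_\infty$ and take $\hat\xi_\rho>0$ from $H(f)(v)$, so that $x\mapsto f(z,x)+\hat\xi_\rho x^{p-1}$ is nondecreasing on $[0,\rho]$, and set $\hat\xi=\xi+\beta+\hat\xi_\rho\ge0$. Adding $(\beta+\hat\xi_\rho)u^{p-1}$ to both sides, I rewrite the equations for $u_\mu$ and $u_\lambda$ in the form demanded by Proposition \ref{prop4}, with right-hand sides
$$h_1=\mu u_\mu^{q-1}+f(z,u_\mu)+(\beta+\hat\xi_\rho)u_\mu^{p-1},\qquad h_2=\lambda u_\lambda^{q-1}+f(z,u_\lambda)+(\beta+\hat\xi_\rho)u_\lambda^{p-1}.$$
Since $0\le u_\mu\le u_\lambda\le\rho$, the monotonicity from $H(f)(v)$ makes the $f$-contribution nonnegative and the $\beta$-shift makes the $p$-power contribution nonnegative, while the parametric concave term supplies the decisive surplus
$$h_2-h_1\ge\lambda u_\lambda^{q-1}-\mu u_\mu^{q-1}\ge(\lambda-\mu)u_\lambda^{q-1}>0\ \mbox{in}\ \Omega.$$
Combined with $0<u_\mu\le u_\lambda$ in $\Omega$ and the $C^{1,\alpha}(\overline{\Omega})$ regularity of both functions, Proposition \ref{prop4} then gives $u_\lambda-u_\mu\in{\rm int}\,\hat{C}_+$; as $u_\lambda-u_\mu$ vanishes on $\partial\Omega$, this is the same as $u_\lambda-u_\mu\in{\rm int}\,C_+$.

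I expect the \emph{main obstacle} to be this last step, namely matching the hypotheses of the strong comparison principle. The truncation and minimization only deliver the weak inequality $u_\mu\le u_\lambda$, and the delicate point is the strict positivity of the reaction gap $h_2-h_1$ up to the boundary, where $u_\lambda^{q-1}\to0$; this is precisely where one must use that $\mu<\lambda$ \emph{strictly} (so that the concave term genuinely separates the two reactions), together with the monotonicity of $\vartheta$ and the shift by $\hat\xi_\rho$ coming from $H(f)(v)$ to keep the comparison operator monotone. Everything in the construction of $u_\mu$ is routine given Propositions \ref{prop5}--\ref{prop8}, so the quantitative input needed to apply Proposition \ref{prop4} is the heart of the matter.
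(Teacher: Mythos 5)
Your construction of $u_\mu$ coincides with the paper's: Proposition \ref{prop8} gives $v\le u_\lambda$, one truncates the reaction of $(P_\mu)$ at the order interval $[v,u_\lambda]$ exactly as in (\ref{eq33}), minimizes the resulting coercive, sequentially weakly lower semicontinuous $C^1$ functional, localizes the minimizer by testing with $(u_\mu-u_\lambda)^+$ and $(v-u_\mu)^+$, and concludes $u_\mu\in S_\mu\subseteq{\rm int}\,C_+$ by Lieberman regularity and the nonlinear maximum principle. Up to and including $\mu\in\mathcal{L}$ and $u_\mu\le u_\lambda$ your argument is the paper's argument.

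The genuine gap is in the last step, and it is precisely the one you flag but do not close. Proposition \ref{prop4} requires $0<c_8\le h_2(z)-h_1(z)$ for almost all $z\in\Omega$, i.e.\ a separation of the right-hand sides that is uniform up to the boundary. Your bound $h_2-h_1\ge(\lambda-\mu)u_\lambda^{q-1}$ is strictly positive in $\Omega$ but degenerates on $\partial\Omega$ since $u_\lambda|_{\partial\Omega}=0$, so the hypothesis of Proposition \ref{prop4} is not satisfied and the conclusion $u_\lambda-u_\mu\in{\rm int}\,\hat{C}_+$ does not follow from that statement; observing that ``one must use $\mu<\lambda$ strictly'' is a correct diagnosis, not an argument. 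The paper sidesteps this by establishing only the pointwise ordering (\ref{eq36}) of the two shifted equations (using $H(f)(v)$ and $\hat{\xi}_\rho$ exactly as you do) and then invoking a different strong comparison principle, Proposition 4 of Papageorgiou \& Smyrlis \cite{17}, which is adapted to this singular setting and does not demand a uniform gap. (Note that in Proposition \ref{prop10} the paper does use its Proposition \ref{prop4}, but only on a subdomain $\Omega_0$ compactly contained in $\Omega$, where $u_\lambda$ is bounded away from zero and a constant $c_8$ is available.) To complete your proof you must either cite such a comparison result, or supply a tangency plus boundary-point argument yielding both $u_\mu<u_\lambda$ in $\Omega$ and the strict inequality of normal derivatives on $\partial\Omega$.
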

\begin{proof}
	From Proposition \ref{prop8} we know that $v\leq u_\lambda$. Then with $\beta>||\xi||_\infty$ we can define the following truncation-perturbation of the reaction in problem ($P_\mu$):
	\begin{equation}\label{eq33}
		e_\mu(z,x)=\left\{\begin{array}{ll}
			\vartheta(v(z))+\mu v(z)^{q-1}+f(z,v(z))+\beta v(z)^{p-1}&\mbox{if}\ x<v(z)\\
			\vartheta(x)+\mu x^{q-1}+f(z,x)+\beta x^{p-1}&\mbox{if}\ v(z)\leq x\leq u_\lambda(z)\\
			\vartheta(u_\lambda(z))+\mu u_\lambda(z)^{q-1}+f(z,u_\lambda(z))+\beta u_\lambda(z)^{p-1}&\mbox{if}\ u_\lambda(z)<x.
		\end{array}\right.
	\end{equation}
	
	Evidently, $e_\mu(z,x)$ is a Carath\'eodory function. We set $E_\mu(z,x)=\int^x_0e_\mu(z,s)ds$ and consider the $C^1$-functional $\hat{\psi}_\mu:W^{1,p}_0(\Omega)\rightarrow\RR$ defined by
	$$\hat{\psi}_\mu(u)=\int_\Omega G(Du)dz+\frac{1}{p}\int_\Omega[\xi(z)+\beta]|u|^pdz-\int_\Omega E_\mu(z,u)dz\ \mbox{for all}\ u\in W^{1,p}_0(\Omega).$$
	
	Clearly, $\hat{\psi}_\mu(\cdot)$ is coercive (see (\ref{eq33}) and recall that $\beta>||\xi||_\infty$). It is also sequentially weakly lower semicontinuous. So, we can find $u_\mu\in W^{1,p}_0(\Omega)$ such that
	\begin{eqnarray}\label{eq34}
		&&\hat{\psi}_\mu(u_\mu)=\inf\{\hat{\psi}_\mu(u):u\in W^{1,p}_0(\Omega)\},\nonumber\\
		&\Rightarrow&\hat{\psi}'_\mu(u_\mu)=0,\nonumber\\
		&\Rightarrow&\left\langle A(u_\mu),h\right\rangle+\int_\Omega[\xi(z)+\beta]|u_\mu|^{p-2}u_\mu hdz=\int_\Omega e_\mu(z,u_\mu)hdz\ \mbox{for all}\ h\in W^{1,p}_0(\Omega).
	\end{eqnarray}
	
	In (\ref{eq34}) we first use $h=(u_\mu-u_\lambda)^+\in W^{1,p}_0(\Omega)$. Then
	\begin{eqnarray*}
		&&\left\langle A(u_\mu),(u_\mu-u_\lambda)^+\right\rangle+\int_\Omega[\xi(z)+\beta]u_\mu^{p-1}(u_\mu-u_\lambda)^+dz\\
		&=&\int_\Omega[\vartheta(u_\lambda)+\mu u_\lambda^{q-1}+f(z,u_\lambda)+\beta u_\lambda^{p-1}](u_\mu-u_\lambda)^+dz\ (\mbox{see (\ref{eq33})})\\
		&\leq&\int_\Omega[\vartheta(u_\lambda)+\lambda u_\lambda^{q-1}+f(z,u_\lambda)+\beta u_\lambda^{p-1}](u_\mu-u_\lambda)^+dz\ (\mbox{since }\lambda>\mu)\\
		&=&\left\langle A(u_\lambda),(u_\mu-u_\lambda)^+\right\rangle+\int_\Omega[\xi(z)+\beta]u_\lambda^{p-1}(u_\mu-u_\lambda)^+dz\ (\mbox{since }u_\lambda\in S_\lambda),\\
		\Rightarrow&&u_\mu\leq u_\lambda\ (\mbox{recall that}\ \beta>||\xi||_\infty).
	\end{eqnarray*}
	
	Next, in (\ref{eq34}) we use $h=(v-u_\mu)^+\in W^{1,p}_0(\Omega)$. Then from Proposition \ref{prop5} and since $f\geq 0$, we obtain
	$$v\leq u_\mu\,.$$
	
	We have proved that
	\begin{equation}\label{eq35}
		u_\mu\in[v,u_\lambda].
	\end{equation}
	
	It follows from (\ref{eq33}), (\ref{eq34}), (\ref{eq35}) that $u_\mu\in S_\mu\subseteq{\rm int}\,C_+$ and so $\mu\in\mathcal{L}$.
	
	Let $\rho=||u_\lambda||_\infty$ and let $\hat{\xi}_\rho>0$ as postulated by hypothesis $H(f)(v)$. We have
	\begin{eqnarray}\label{eq36}
		&&-{\rm div}\,a(Du_\mu)+[\xi(z)+\hat{\xi}_\rho]u_\mu^{p-1}-\vartheta(u_\mu)\nonumber\\
		&=&\mu u_\mu^{q-1}+f(z,u_\mu)+\hat{\xi}_\rho u_\mu^{p-1}\nonumber\\
		&\leq&\lambda u_\lambda^{q-1}+f(z,u_\lambda)+\hat{\xi}_\rho u_\lambda^{p-1}\ (\mbox{see hypothesis}\ H(f)(vi),\ \mbox{(\ref{eq35}) and recall that}\ \mu<\lambda)\nonumber\\
		&=&-{\rm div}\,a (Du_\lambda)+[\xi(z)+\hat{\xi}_\rho]u_\lambda^{p-1}-\vartheta(u_\lambda).
	\end{eqnarray}
	
	From (\ref{eq36}) and Proposition 4 of Papageorgiou \& Smyrlis \cite{17}, we obtain
	$$u_\lambda-u_\mu\in{\rm int}\,C_+.$$
The proof is now complete.
\end{proof}
\begin{prop}\label{prop10}
	If hypotheses $H(a),H(\xi),H(\vartheta),H(f)$ hold, then $\lambda^*<+\infty$.
\end{prop}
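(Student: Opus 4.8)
The plan is first to establish a uniform pointwise lower bound on the reaction that forces nonexistence once $\lambda$ is large, and then to contradict this bound by comparison with the principal eigenvalue $\hat{\lambda}_1$ of the Dirichlet $p$-Laplacian. For the first part I would fix any target $m_0>0$. Since $f(z,\cdot)$ is $(p-1)$-superlinear near $+\infty$, by \eqref{eq5} there is $x_0=x_0(m_0)>0$ with $f(z,x)\ge m_0 x^{p-1}$ for almost all $z\in\Omega$ and all $x\ge x_0$. On the complementary range $0\le x\le x_0$ I would exploit the concave term: since $q<p$, the inequality $\lambda x^{q-1}\ge m_0 x^{p-1}$ is equivalent to $x^{p-q}\le\lambda/m_0$, hence it holds on all of $[0,x_0]$ as soon as $\lambda\ge m_0 x_0^{p-q}=:\tilde\lambda$. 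Recalling $\vartheta\ge0$ and $f\ge0$, this gives, for every $\lambda\ge\tilde\lambda$,
\[
\vartheta(x)+\lambda x^{q-1}+f(z,x)\ge m_0 x^{p-1}\quad\mbox{for a.a. }z\in\Omega\mbox{ and all }x\ge0.
\]

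Next, I would argue by contradiction. Suppose $\lambda\in\mathcal{L}$ with $\lambda\ge\tilde\lambda$ and pick $u\in S_\lambda\subseteq{\rm int}\,C_+$. By Proposition \ref{prop8} we have $v\le u$, so $u\ge v\in{\rm int}\,C_+$; consequently the quotient $\hat{u}_1/u$ is bounded on $\overline\Omega$ and $\hat{u}_1^p/u^{p-1}\in W^{1,p}_0(\Omega)$ is an admissible nonnegative test function. Inserting it into the weak formulation of \eqref{eqp} and using the bound from the first step, $u$ is a positive supersolution of $-{\rm div}\,a(Du)+\xi(z)u^{p-1}=m_0 u^{p-1}$, and testing yields
\[
\int_\Omega\left(a(Du),D\Big(\tfrac{\hat{u}_1^p}{u^{p-1}}\Big)\right)_{\RR^N}dz+\int_\Omega\xi(z)\hat{u}_1^p\,dz\ge m_0\int_\Omega \hat{u}_1^p\,dz=m_0,
\]
where I used the normalization $||\hat{u}_1||_p=1$.

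The crux is then to bound the left-hand side above by a constant independent of $\lambda$. Exploiting that $a=\nabla G$ with $G$ convex, I would invoke a Picone-type inequality to obtain $\int_\Omega(a(Du),D(\hat{u}_1^p/u^{p-1}))_{\RR^N}\,dz\le\int_\Omega(a(D\hat{u}_1),D\hat{u}_1)_{\RR^N}\,dz$, the right-hand side being finite and independent of $\lambda$ (it is controlled by Lemma \ref{lem2} and Corollary \ref{cor3}, depending only on $\hat{u}_1$ and the data). Together with $\int_\Omega\xi\hat{u}_1^p\,dz\le||\xi||_\infty$, this produces a fixed upper bound $m_0\le C_0$. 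Choosing at the outset $m_0>C_0$ and the corresponding threshold $\tilde\lambda$, we reach a contradiction; hence no $\lambda>\tilde\lambda$ belongs to $\mathcal{L}$, so $\lambda^*\le\tilde\lambda<+\infty$.

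The main obstacle is the nonhomogeneity of the operator $u\mapsto{\rm div}\,a(Du)$: the classical Picone inequality is tailored to the $p$-Laplacian, so the comparison in the last step cannot be quoted verbatim and must be re-derived from the convexity of the primitive $G$ and the growth estimates of Lemma \ref{lem2} and Corollary \ref{cor3}. A secondary technical point is the admissibility of $\hat{u}_1^p/u^{p-1}$ as a test function in $W^{1,p}_0(\Omega)$, which is precisely where the a priori lower bound $v\le u$ of Proposition \ref{prop8} together with $v\in{\rm int}\,C_+$ (so that $\hat{u}_1/u$ stays bounded up to $\partial\Omega$) enters.
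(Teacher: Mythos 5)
Your first step is fine and in fact leaner than the paper's: the paper splits $[0,\infty)$ into three regimes, using $H(\vartheta)$ near $0$, superlinearity near $+\infty$ and $H(f)(i)$ on the middle range to get $\vartheta(x)+\lambda_0 x^{q-1}+f(z,x)\ge x^{p-1}$, whereas you let the concave term absorb the whole bounded range; both work. The divergence is in how the lower bound is turned into nonexistence. The paper does \emph{not} use an eigenvalue/Picone comparison: it takes $u_\lambda\in S_\lambda$ for $\lambda>\lambda_0$, fixes a compactly contained subdomain $\Omega_0$, sets $m_0=\min_{\overline\Omega_0}u_\lambda$, and shows via the pointwise bound and the strict gap $(\lambda-\lambda_0)u_\lambda^{q-1}>0$ that the constant $m_0+\epsilon$ and $u_\lambda$ satisfy the hypotheses of the strong comparison principle (Proposition \ref{prop4}) on $\Omega_0$, forcing $u_\lambda-(m_0+\epsilon)\in{\rm int}\,\hat C_+(\overline\Omega_0)$ and contradicting the definition of $m_0$. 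This only needs the coefficient $1$ in the lower bound, not a large $m_0$.

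The genuine gap in your argument is the inequality $\int_\Omega\bigl(a(Du),D(\hat u_1^p/u^{p-1})\bigr)dz\le\int_\Omega\bigl(a(D\hat u_1),D\hat u_1\bigr)dz$, which you identify as the crux but do not prove. Convexity of $G$ cannot deliver it: the classical Picone computation rests on the $(p-1)$-homogeneity of $y\mapsto|y|^{p-2}y$, and for a general $a$ satisfying $H(a)$ (e.g.\ Example \ref{ex1}(d)) no such clean inequality holds. If you try to run the convexity route, writing $D(\hat u_1^p/u^{p-1})=pt^{p-1}D\hat u_1-(p-1)t^pDu$ with $t=\hat u_1/u$ and using $(a(Du),D\hat u_1)\le G(D\hat u_1)-G(Du)+(a(Du),Du)$ together with $(a(y),y)\le pG(y)$, you are left with a remainder $(p-1)t^{p-1}(1-t)(a(Du),Du)$ which is nonnegative when $t\le1$ and depends on $u$, hence on $\lambda$; the estimate does not close. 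What \emph{does} close the argument is to abandon the Picone form entirely: bound $pt^{p-1}(a(Du),D\hat u_1)$ by Lemma \ref{lem2}(b) and Young's inequality, absorbing the terms $(t|Du|)^p$ into $-(p-1)t^p(a(Du),Du)\le-c_1t^p|Du|^p$ from Lemma \ref{lem2}(c); the lower-order piece $|Du|^{s-1}$ requires the bound $t=\hat u_1/u\le\hat u_1/v\le C$, which is independent of $\lambda$ precisely because of Proposition \ref{prop8} and \eqref{eq1}. This yields $\int_\Omega(a(Du),D(\hat u_1^p/u^{p-1}))dz\le C'(1+\hat\lambda_1)$ with $C'$ independent of $\lambda$, and then $m_0\le C'(1+\hat\lambda_1)+\|\xi\|_\infty$ gives your contradiction. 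So your strategy is salvageable, but the step you rely on is not the one that works, and as written the proof is incomplete at its central point. Note also that the role of $v\le u$ is not merely the admissibility of the test function, as you suggest, but the $\lambda$-uniformity of the constant in this estimate.
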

\begin{proof}
	Recall that by hypotheses $H(f)(ii),(iii)$, we have
	$$\lim_{x\rightarrow+\infty}\frac{f(z,x)}{x^{p-1}}=+\infty\ \mbox{uniformly for almost all}\ z\in\Omega.$$
	
	So, we can find $M>0$ such that
	\begin{equation}\label{eq37}
		f(z,x)\geq x^{p-1}\ \mbox{for almost all}\ z\in\Omega,\ \mbox{and all}\ x\geq M.
	\end{equation}
	
	Hypotheses $H(\vartheta)$ imply that we can find small $\delta\in\left(0,1\right]$ such that
	\begin{equation}\label{eq38}
		\vartheta(x)\geq\vartheta(\delta)\geq 1\geq\delta^{p-1}\geq x^{p-1}\ \mbox{for all}\ x\in\left(0,\delta\right].
	\end{equation}
	
	Finally, hypotheses $H(f)(i),(v)$ imply that we can find big $\lambda_0>0$ such that
	\begin{equation}\label{eq39}
		\lambda_0x^{q-1}+f(z,x)\geq x^{p-1}\ \mbox{for almost all}\ z\in\Omega\ \mbox{and all}\ \delta\leq x\leq M.
	\end{equation}
	
	Combining (\ref{eq37}), (\ref{eq38}), (\ref{eq39}) we have
	\begin{equation}\label{eq40}
		\vartheta(x)+\lambda_0x^{q-1}+f(z,x)\geq x^{p-1}\ \mbox{for almost all}\ z\in\Omega\ \mbox{and all}\ x\geq 0.
	\end{equation}
	
	Let $\lambda>\lambda_0$ and assume that $\lambda\in\mathcal{L}$. Then according to Proposition \ref{prop7} we can find $u_\lambda\in S_\lambda\subseteq{\rm int}\,C_+$. Let $\Omega_0\subseteq\Omega$ be an open set with $\overline{\Omega}_0\subseteq\Omega$ and $C^2$-boundary $\partial\Omega_0$. We have
	$$0<m_0=\min\limits_{\overline{\Omega}_0}u_\lambda.$$
	
	For $\epsilon>0$, let $m^{\epsilon}_{0}=m_0+\epsilon$ and with $\rho=||u_\lambda||_\infty$, let $\hat{\xi}_\rho>0$ be as postulated by hypothesis $H(f)(v)$. We can always take $\hat{\xi}_\rho>||\xi||_\infty$. We have
	\begin{eqnarray}\label{eq41}
		&&-{\rm div}\,a(Dm^\epsilon_0)+[\xi(z)+\hat{\xi}_\rho](m^\epsilon_0)^{p-1}-\vartheta(m^\epsilon_0)\nonumber\\
		&\leq&[\xi(z)+\hat{\xi}_\rho]m^{p-1}_0+\chi(\epsilon)-\vartheta(m_0)\nonumber\\
		&&\mbox{with}\ \chi(\epsilon)\rightarrow 0^+\ \mbox{as}\ \epsilon\rightarrow 0^+\ (\mbox{see hypotheses}\ H(\vartheta))\nonumber\\
		&<&[\xi(z)+\hat{\xi}_\rho]u^{p-1}_{\lambda}+u^{p-1}_\lambda-\vartheta(u_\lambda)+\chi(\epsilon)\nonumber\\
		&<&[\xi(z)+\hat{\xi}_\rho]u^{p-1}_{\lambda}+\lambda_0u^{p-1}_\lambda+f(z,u_\lambda)+\chi(\epsilon)\ (\mbox{see (\ref{eq40})})\nonumber\\
		&=&[\xi(z)+\hat{\xi}_\rho]u^{p-1}_\lambda+\lambda u^{q-1}_\lambda+f(z,u_\lambda)-(\lambda-\lambda_0)u^{q-1}_\lambda+\chi(\epsilon)\nonumber\\
		&<&[\xi(z)+\hat{\xi}_\rho]u^{p-1}_\lambda+\lambda u_\lambda^{q-1}+f(z,u_\lambda)\ \mbox{for}\ \epsilon>0\ \mbox{small enough}\nonumber\\
		&=&-{\rm div}\,a(Du_\lambda)+[\xi(z)+\hat{\xi}_p]u^{p-1}_\lambda-\vartheta(u_\lambda)\ \mbox{for almost all}\ z\in\Omega_0\ (\mbox{recall that}\ u_\lambda\in S_\lambda).
	\end{eqnarray}
	
	Then from (\ref{eq40}) and Proposition \ref{prop4}, we see that for small enough $\epsilon>0$ we have
	$$u_\lambda-m^\epsilon_0\in{\rm int}\,\hat{C}_+(\overline{\Omega}_0),$$
	which contradicts the definition of $m_0$. Hence $\lambda\notin\mathcal{L}$ and so $\lambda^*\leq\lambda_0<+\infty$.
\end{proof}

By Propositions \ref{prop9} and \ref{prop10} it follows that
\begin{equation}\label{eq42}
	(0,\lambda^*)\subseteq\mathcal{L}\subseteq\left(0,\lambda^*\right].
\end{equation}
\begin{prop}\label{prop11}
	If hypotheses $H(a),H(\xi),H(\vartheta),H(f)$ hold and $\lambda\in(0,\lambda^*)$, then problem \eqref{eqp} admits at least two positive solutions
	$$u_0,\hat{u}\in{\rm int}\,C_+,\ u_0\neq\hat{u}.$$
\end{prop}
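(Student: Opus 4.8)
The plan is to produce the two positive solutions in $\mathrm{int}\,C_+$ by a two-pronged strategy: one solution $u_0$ obtained as a minimizer on an order interval (exploiting the bounds already established), and a second, distinct solution $\hat u$ obtained via the mountain pass theorem applied to a suitable energy functional. The key structural facts I will lean on are $(0,\lambda^*)\subseteq\mathcal L$ from \eqref{eq42}, the existence of $v\in\mathrm{int}\,C_+$ as a universal lower barrier (Proposition \ref{prop8}), and the strong monotonicity from Proposition \ref{prop9}.

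First I would fix $\lambda\in(0,\lambda^*)$ and pick an intermediate value $\mu$ with $\lambda<\mu<\lambda^*$, so that $\mu\in\mathcal L$; let $u_\mu\in S_\mu$. By Proposition \ref{prop9} applied with the roles $\mu\to\lambda$, there exists $u_0\in S_\lambda$ with $u_\mu-u_0\in\mathrm{int}\,C_+$, and by Proposition \ref{prop8} we have $v\le u_0$. This furnishes the \emph{first} solution $u_0\in\mathrm{int}\,C_+$, and crucially places it strictly below $u_\mu$ in the $C_0^1(\overline\Omega)$-order, so that $u_0\in\mathrm{int}_{C_0^1(\overline\Omega)}[v,u_\mu]$. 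I would then define a truncation-perturbation functional $\sigma_\lambda$ of the type used in \eqref{eq26} (truncating the reaction between $v$ and $u_\mu$, perturbing by $\beta x^{p-1}$ with $\beta>\|\xi\|_\infty$) whose critical points in $[v,u_\mu]$ are exactly solutions of \eqref{eqp}, and show that $u_0$ is a local minimizer of $\sigma_\lambda$ in the $C_0^1(\overline\Omega)$-topology. The standard passage from a $C_0^1$-local minimizer to a $W^{1,p}_0$-local minimizer (the nonlinear analogue of the Brezis--Nirenberg result, available for this class of operators) then makes $u_0$ a local minimizer of the \emph{unrestricted} energy functional $\sigma_\lambda$ associated with the full (singular) problem.

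For the \emph{second} solution I would work with this full energy functional, handling the singular term $\vartheta$ by the now-standard device of fixing the singularity at the lower barrier (replacing $\vartheta(u)$ by $\vartheta(v)$ for $u\le v$) so that the functional is $C^1$; the $(p-1)$-superlinearity of $f$ near $+\infty$ together with hypothesis $H(f)(iii)$ gives the Cerami (C-)condition without the Ambrosetti--Rabinowitz condition, by the usual argument bounding a putative unbounded C-sequence via the quantity $f(z,x)x-pF(z,x)$. With $u_0$ a local minimizer, I distinguish two cases: if $u_0$ is not a strict local minimizer there are already infinitely many critical points (a minimizing sequence of distinct ones), giving a second solution immediately; otherwise the mountain pass geometry holds around $u_0$, and the mountain pass theorem yields a critical point $\hat u$ with $\sigma_\lambda(\hat u)>\sigma_\lambda(u_0)$, hence $\hat u\ne u_0$. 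A comparison argument against $v$ (as in Propositions \ref{prop6}--\ref{prop8}) and the regularity-plus-maximum-principle machinery place $\hat u\in\mathrm{int}\,C_+$.

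\emph{The main obstacle} I anticipate is verifying the Cerami condition for the superlinear functional in the presence of the singular term, since the singularity obstructs the usual testing of the Euler equation with the solution itself and with truncations; the resolution is to freeze the singular nonlinearity below the barrier $v$ (using $\vartheta(v)\in L^s(\Omega)$, $s>N$, established in Proposition \ref{prop6}) so that the singular contribution becomes a fixed $L^s$-datum and the remaining reaction is subcritical and superlinear, at which point the $H(f)(iii)$-based boundedness argument applies. A secondary technical point is ensuring the two solutions are genuinely distinct and both lie in $\mathrm{int}\,C_+$ rather than merely in $C_+\setminus\{0\}$; this is secured by the strict ordering $u_\mu-u_0\in\mathrm{int}\,C_+$ together with the strong comparison principle (Proposition \ref{prop4}) and the nonlinear maximum principle of Pucci \& Serrin \cite{21}.
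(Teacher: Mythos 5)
Your overall strategy (truncate onto an order interval, identify $u_0$ as a constrained minimizer, upgrade to a $W^{1,p}_0$-local minimizer via the $C^1_0$ versus $W^{1,p}_0$ result, then run the mountain pass with the C-condition verified through $H(f)(iii)$) is exactly the paper's. The gap is in your choice of order interval. You take $[v,u_\mu]$ with $\mu>\lambda$ and assert $u_0\in{\rm int}_{C^1_0(\overline{\Omega})}[v,u_\mu]$. The upper separation is fine ($u_\mu-u_0\in{\rm int}\,C_+$ by Proposition \ref{prop9}), but the lower one is not: Proposition \ref{prop8} gives only $v\leq u_0$, and you cannot promote this to the strict separation needed for $C^1_0$-interiority. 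The strong comparison principle (Proposition \ref{prop4}) does not apply to the pair $(v,u_0)$, because the difference of the right-hand sides is $\lambda u_0^{q-1}+f(z,u_0)+(\xi^+-\xi)u_0^{p-1}$, which vanishes on $\partial\Omega$ and therefore is not bounded below by a positive constant $c_8$ on $\Omega$, as that proposition requires. Without $u_0$ interior to the interval where the truncated and untruncated functionals coincide, the passage from ``constrained minimizer'' to ``local $C^1_0$-minimizer of the full functional'' breaks down as you have set it up. The paper avoids this by a three-level sandwich: it picks $\mu<\lambda<\eta$ in $\mathcal{L}$ and uses Proposition \ref{prop9} \emph{twice}, obtaining $u_\mu<u_0<u_\eta$ with $u_0-u_\mu\in{\rm int}\,C_+$ and $\eta_ {}$-side separation as well, and truncates the reaction at $u_\mu$ from below and $u_\eta$ from above; both separations then come for free. (Your scheme is salvageable without the lower separation only if you arrange for the ``restricted'' and ``full'' functionals to carry the identical lower truncation, so that they agree on the whole set $\{u\leq u_\mu\}$, which does contain a $C^1_0$-ball around $u_0$; but that is not what you wrote, and you would still need the lower truncation at a genuine subsolution to relocate the mountain pass critical point above it.)

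A secondary omission: you say you will ``show that $u_0$ is a local minimizer of $\sigma_\lambda$'' but give no mechanism for identifying $u_0$ with the global minimizer of the doubly truncated, coercive functional. The paper's device is the dichotomy $K_{\hat{\varphi}_\lambda}=\{u_0\}$ ``or else we already have a second solution'' (see \eqref{eq48}); since every critical point of the doubly truncated functional lies in the order interval and is therefore a solution of \eqref{eqp}, either there is a second critical point (done) or the minimizer must be $u_0$. You invoke a similar dichotomy later for the full functional, but it is needed at this earlier stage. The rest of your outline --- coercivity, the C-condition argument freezing the singularity at the lower barrier so that $\vartheta(v)\in L^s(\Omega)$, $s>N$, acts as a fixed datum, the interpolation estimate from $H(f)(iii)$, and the separation of critical values $\varphi_\lambda(\hat{u})\geq m_\rho>\varphi_\lambda(u_0)$ --- matches the paper and is sound.
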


\begin{proof}
	Let $0<\mu<\lambda<\eta<\lambda^*$. We have $\mu,\eta\in\mathcal{L}$ (see (\ref{eq42})). On account of Proposition \ref{prop9} we can find $u_\mu\in S_\mu\subseteq{\rm int}\,C_+,\ u_0\in S_\lambda\subseteq{\rm int}\,C_+,\ u_\eta\in S_\eta\subseteq{\rm int}\,C_+$ such that
	\begin{eqnarray}\label{eq43}
		&&u_0-u_\eta\in{\rm int}\,C_+\ \mbox{and}\ u_\eta-u_0\in{\rm int}\,C_+,\nonumber\\
		&\Rightarrow&u_0\in {\rm int}_{C^1_0(\overline{\Omega})}[u_\mu,u_\eta].
	\end{eqnarray}
	
	With $\beta>||\xi||_\infty$, we introduce the Carath\'eodory function $d_\lambda(z,x)$ defined by
	\begin{equation}\label{eq44}
		d_\lambda(z,x)=\left\{\begin{array}{ll}
			\vartheta(u_\mu(z))+\lambda u_\mu(z)^{q-1}+f(z,u_\mu(z))+\beta u_\mu(z)^{p-1}&\mbox{if}\ x\leq u_\mu(z)\\
			\vartheta(x)+\lambda x^{q-1}+f(z,x)+\beta x^{p-1}&\mbox{if}\ u_\mu(z)<x.
		\end{array}\right.
	\end{equation}
	
	We set $D_\lambda(z,x)=\int^x_0 d_\lambda (z,s)ds$ and consider the functional $\varphi_\lambda:W^{1,p}_0(\Omega)\rightarrow\RR$ defined by
	$$\varphi_\lambda(u)=\int_\Omega G(Du)dz+\frac{1}{p}\int_\Omega[\xi(z)+\beta]|u|^pdz-\int_\Omega D_\lambda(z,u)dz\ \mbox{for all}\ u\in W^{1,p}_0(\Omega).$$
	
	We know that $\varphi_\lambda\in C^1(W^{1,p}_0(\Omega))$ (see Papageorgiou \& Smyrlis \cite[Proposition 3]{17}). Also, let
	\begin{equation}\label{eq45}
		\hat{d}_\lambda(z,x)=\left\{\begin{array}{ll}
			d_\lambda(z,x)&\mbox{if}\ x\leq u_\eta(z)\\
			d_\lambda(z,u_\eta(z))&\mbox{if}\ u_\eta(z)<x.
		\end{array}\right.
	\end{equation}
	
	This is a Carath\'eodory function. We set $\hat{D}_\lambda(z,x)=\int^x_0\hat{d}_\lambda(z,s)ds$ and consider the $C^1$-functional $\hat{\varphi}_\lambda:W^{1,p}_0(\Omega)\rightarrow\RR$ defined by
	$$\hat{\varphi}_\lambda(u)=\int_\Omega G(Du)dz+\frac{1}{p}\int_\Omega[\xi(z)+\beta]|u|^pdz-\int_\Omega\hat{D}_\lambda(z,u)dz\ \mbox{for all}\ u\in W^{1,p}_0(\Omega).$$
	
	Using (\ref{eq44}) and (\ref{eq45}) and the nonlinear regularity theory (see the proof of Proposition \ref{prop7}), we show that
	\begin{eqnarray}
		&&K_{\varphi_\lambda}\subseteq\left[u_\mu\right)\cap{\rm int}\,C_+,\label{eq46}\\
		&&K_{\hat{\varphi}_\lambda}\subseteq[u_\mu,u_\eta]\cap{\rm int}\,C_+.\label{eq47}
	\end{eqnarray}
	
	From (\ref{eq47}) we see that we can assume that
	\begin{equation}\label{eq48}
		K_{\hat{\varphi}_\lambda}=\{u_0\}
	\end{equation}
	or otherwise we already have a second positive solution for \eqref{eqp} (see (\ref{eq45})) and so we are done.
	
	Clearly, $\hat{\varphi}_\lambda(\cdot)$ is coercive (see (\ref{eq45})) and sequentially weakly lower semicontinuous. So, we can find $\hat{u}_0\in W^{1,p}_0(\Omega)$ such that
	\begin{eqnarray}\label{eq49}
		&&\hat{\varphi}_\lambda(\hat{u_0})=\inf\{\hat{\varphi}_\lambda(u):u\in W^{1,p}_0(\Omega)\},\\
		&\Rightarrow&\hat{u}_0\in K_{\hat{\varphi}_\lambda},\nonumber\\
		&\Rightarrow&\hat{u}_0=u_0\ (\mbox{see (\ref{eq48})}).\nonumber
	\end{eqnarray}
	
	But from (\ref{eq44}) and (\ref{eq45}) we see that
	\begin{equation}\label{eq50}
		\left.\hat{\varphi}_\lambda\right|_{[u_\mu,u_\eta]}=\left.\varphi_\lambda\right|_{[u_\mu,u_\eta]}.
	\end{equation}
	
	It follows from (\ref{eq43}), (\ref{eq49}), (\ref{eq50}) that
	\begin{eqnarray}\label{eq51}
		&&u_0\ \mbox{is a local}\ C^1_0(\overline{\Omega})\mbox{-minimizer of}\ \varphi_\lambda,\nonumber\\
		&\Rightarrow&u_0\ \mbox{is a local}\ W^{1,p}_0(\Omega)\mbox{-minimizer of}\ \varphi_\lambda\ (\mbox{see \cite{5}}).
	\end{eqnarray}
	
	On account of (\ref{eq44}) and (\ref{eq46}), we may assume that
	\begin{equation}\label{eq52}
		K_{\varphi_\lambda}\ \mbox{is finite}.
	\end{equation}
	
	Otherwise we already have an infinity of positive smooth solutions. From (\ref{eq51}), (\ref{eq52}) and Theorem 5.7.6 of Papageorgiou, R\u{a}dulescu \& Repov\v{s} \cite{15}, we see that we can find small $\rho\in(0,1)$  such that
	\begin{equation}\label{eq53}
		\varphi_\lambda(u_0)<\inf\{\varphi_\lambda(u):||u-u_0||=\rho\}=m_\rho.
	\end{equation}
	
	Hypothesis $H(f)(ii)$ and Corollary \ref{cor3} imply that if $u\in{\rm int}\,C_+$, then
	\begin{equation}\label{eq54}
		\varphi_\lambda(tu)\rightarrow-\infty\ \mbox{as}\ t\rightarrow+\infty.
	\end{equation}
	\begin{claim}
		$\varphi_\lambda$ satisfies the $C$-condition.
	\end{claim}
	
	Consider a sequence $\{u_n\}_{n\geq 1}\subseteq W^{1,p}_0(\Omega)$ such that
	\begin{eqnarray}
		&&|\varphi_\lambda(u_n)|\leq c_{15}\ \mbox{for some}\ c_{15}>0,\ \mbox{and all}\ n\in\NN,\label{eq55}\\
		&&(1+||u_n||)\varphi'_\lambda(u_n)\rightarrow\ \mbox{in}\ W^{-1,p'}(\Omega)=W^{1,p}_0(\Omega)^*\ \mbox{as}\ n\rightarrow\infty.\label{eq56}
	\end{eqnarray}
	
	From (\ref{eq56}) we have
	\begin{eqnarray}\label{eq57}
		&&\left|\left\langle A(u_n),h\right\rangle+\int_\Omega[\xi(z)+\beta]|u_n|^{p-2}u_nhdz-\int_\Omega d_\lambda(z,u_n)hdz\right|\leq\frac{\epsilon_n||h||}{1+||u_n||}\\
		&&\mbox{for all}\ h\in W^{1,p}_0(\Omega),\ \mbox{with}\ \epsilon_n\rightarrow 0^+.\nonumber
	\end{eqnarray}
	
	In (\ref{eq57}) we choose $h=-u^-_n\in W^{1,p}_0(\Omega)$. From (\ref{eq44}) and Lemma \ref{lem2}, we have
	\begin{eqnarray}\label{eq58}
		&&\frac{c_1}{p-1}||Du^-_n||^p_p+\int_\Omega[\xi(z)+\beta](u^-_n)^pdz\leq\epsilon_n+c_{16}||u^-_n||\nonumber\\
		&&\mbox{for some}\ c_{16}>0,\ \mbox{and all}\ n\in\NN,\nonumber\\
		&\Rightarrow&\{u^-_n\}_{n\geq 1}\subseteq W^{1,p}_0(\Omega)\ \mbox{is bounded}\ (\mbox{recall that}\ \beta>||\xi||_\infty).
	\end{eqnarray}
	
	Next, in (\ref{eq57}) we choose $h=u^+_n\in W^{1,p}_0(\Omega)$. Then
	\begin{eqnarray}\label{eq59}
		&&-\int_\Omega(a(Du^+_n),Du^+_n)_{\RR^N}dz-\int_\Omega[\xi(z)+\beta](u^+_n)^pdz+\int_\Omega[\lambda(u^+_n)^q+f(z,u^+_n)u^+_n]dz\leq c_{17}\\
		&&\mbox{for some}\ c_{17}>0\ \mbox{and all}\ n\in\NN\ (\mbox{see (\ref{eq44}) and hypothesis}\ H(\vartheta)(ii)).\nonumber
	\end{eqnarray}
	
	From (\ref{eq55}) and (\ref{eq58}) we obtain
	\begin{eqnarray}\label{eq60}
		&&\int_\Omega pG(Du^+_n)dz+\int_\Omega[\xi(z)+\beta](u^+_n)^pdz-\int_\Omega\left[\frac{\lambda p}{q}(u^+_n)^q+pF(z,u^+_n)\right]dz\leq c_{18}\\
		&&\mbox{for some}\ c_{18}>0\ \mbox{and all}\ n\in\NN.\nonumber
	\end{eqnarray}
	
	Adding (\ref{eq59}) and (\ref{eq60}) and using hypothesis $H(a)(iv)$, we obtain
	\begin{eqnarray}\label{eq61}
		&&\int_\Omega[f(z,u^+_n)u^+_n-pF(z,u^+_n)]dz\leq c_{19}+\lambda\left[\frac{p}{q}-1\right]||u^+_n||^q_q\\
		&&\mbox{for some}\ c_{19}>0,\ \mbox{all}\ n\in\NN.\nonumber
	\end{eqnarray}
	
	From hypotheses $H(f)(i),(iii)$ we see that we can find $\hat{\beta}_1\in(0,\hat{\beta_0})$ and $c_{20}>0$ such that
	\begin{equation}\label{eq62}
		\hat{\beta}_1x^{\sigma}-c_{20}\leq f(z,x)x-pF(z,x)\ \mbox{for almost all}\ z\in\Omega\ \mbox{and all}\ x\geq 0.
	\end{equation}
	
	Using (\ref{eq62}) in (\ref{eq61}) and recalling that $q<\sigma$ (see hypothesis $H(f)(iii)$) we obtain that
	\begin{equation}\label{eq63}
		\{u^+_n\}_{n\geq 1}\subseteq L^{\sigma}(\Omega)\ \mbox{is bounded}.
	\end{equation}
	
	First, suppose that $N\neq p$. It is clear from hypothesis $H(f)(iii)$ that we may assume that $\sigma<r<p^*$ (recall that $p^*=+\infty$ if $N\leq p$). Let $t\in(0,1)$ be such that
	$$\frac{1}{r}=\frac{1-t}{\sigma}+\frac{t}{p^*}.$$
	
	From the interpolation inequality (see, for example, Papageorgiou \& Winkert \cite[Proposition 2.3.17, p.116]{19}), we have
	\begin{eqnarray}\label{eq64}
		&&||u^+_n||_r\leq||u^+_n||^{1-t}_\sigma||u^+_n||^t_{p^*},\nonumber\\
		&\Rightarrow&||u^+_n||^r_r\leq c_{21}||u^+_n||^{tr}\\
		&&\mbox{for some}\ c_{21}>0\ \mbox{and all}\ n\in\NN\ (\mbox{see (\ref{eq63})}).\nonumber
	\end{eqnarray}
	
	From hypothesis $H(f)(i)$, we have
	\begin{equation}\label{eq65}
		f(z,x)x\leq c_{22}[1+x^r]\ \mbox{for almost all}\ z\in\Omega,\ \mbox{all}\ x\geq 0\ \mbox{and some}\ c_{22}>0.
	\end{equation}
	
	In (\ref{eq57}) we choose $h=u^+_n\in W^{1,p}_0(\Omega)$ and use Lemma \ref{lem2}. Then
	\begin{eqnarray}\label{eq66}
		&&\frac{c_1}{p-1}||Du^+_n||^p_p+\int_\Omega[\xi(z)+\beta](u^+_n)^pdz\leq\epsilon_n+\int_\Omega d_\lambda(z,u_n)u^+_ndz,\nonumber\\
		&\Rightarrow&\frac{c_1}{p-1}||Du^+_n||^p_p\leq c_{23}+\int_\Omega[\lambda(u^+_n)^q+f(z,u^+_n)u^+_n]dz\nonumber\\
		&&\mbox{for some}\ c_{23}>0\ \mbox{and all}\ n\in\NN\ (\mbox{see (\ref{eq44})})\nonumber\\
		&\leq&c_{24}[1+\lambda||u^+_n||^q+||u^+_n||^{tr}]\\
		&&\mbox{for some}\ c_{24}>0\ \mbox{and all}\ n\in\NN\ (\mbox{see (\ref{eq64}) and (\ref{eq65})}).\nonumber
	\end{eqnarray}
	
	The hypothesis on $\sigma$ (see $H(f)(iii)$) implies that $tr<p$. Also we have $q<p$. Therefore it follows from (\ref{eq66}) that
	\begin{equation}\label{eq67}
		\{u^+_n\}_{n\geq 1}\subseteq W^{1,p}_0(\Omega)\ \mbox{is bounded}.
	\end{equation}
	
	If $p=N$, then $p^*=+\infty$ and by the Sobolev embedding theorem, we have that $W^{1,p}_0(\Omega)\hookrightarrow L^s(\Omega)$ for all $1\leq s<\infty$. So, we need to replace in the previous argument  $p^*$ by $s>r>\sigma$ big enough. More precisely, as before, let $t\in(0,1)$ be such that
	\begin{eqnarray*}
		&&\frac{1}{r}=\frac{1-t}{\sigma}+\frac{t}{s},\\
		&\Rightarrow&tr=\frac{s(r-\sigma)}{s-\sigma}\rightarrow r-\sigma\ \mbox{as}\ s\rightarrow+\infty.
	\end{eqnarray*}
	
	Recall that $r-\sigma<p$ (see hypothesis $H(f)(iii)$). Hence for large enough $s>r$
	$$tr=\frac{s(r-\sigma)}{s-\sigma}<p.$$
	
	Then for such large $s>r$, the previous argument is valid and we again obtain (\ref{eq67}).
	
	From (\ref{eq58}) and (\ref{eq67}) we have that $\{u_n\}_{n\geq 1}\subseteq W^{1,p}_0(\Omega)$ is bounded. So, we may assume that
	\begin{equation}\label{eq68}
		u_n\stackrel{w}{\rightarrow}u\ \mbox{in}\ W^{1,p}_0(\Omega).
	\end{equation}
	
	In (\ref{eq57}) we choose $h=u_n-u\in W^{1,p}_0(\Omega)$, pass to the limit as $n\rightarrow\infty$, and use (\ref{eq68}). Then
	\begin{eqnarray*}
		&&\lim\limits_{n\rightarrow\infty}\left\langle A(u_n),u_n-u\right\rangle=0,\\
		&\Rightarrow&u_n\rightarrow u\ \mbox{in}\ W^{1,p}_0(\Omega)\\
		&&(\mbox{using the}\ (S)_+\ \mbox{property of}\ A(\cdot),\ \mbox{see Section 2}),\\
		&\Rightarrow&\varphi_\lambda(\cdot)\ \mbox{satisfies the}\ C-\mbox{condition}.
	\end{eqnarray*}
	
	This proves Claim 1.
	
	From (\ref{eq53}), (\ref{eq54}) and  Claim 1, we see that we can apply the mountain pass theorem. So, we can find $\hat{u}\in W^{1,p}_0(\Omega)$ such that
	\begin{equation}\label{eq69}
		\hat{u}\in K_{\varphi_\lambda}\subseteq\left[u_\mu\right)\cap{\rm int}\,C_+\ \mbox{(see (\ref{eq46})) and}\ m_\rho\leq\varphi_\lambda(\hat{u})\ (\mbox{see (\ref{eq53})}).
	\end{equation}
	
It follows 	rom (\ref{eq44}) and (\ref{eq69}) that
	$$\hat{u}\in S_\lambda\subseteq{\rm int}\,C_+\ \mbox{and}\ u_0\neq\hat{u}.$$
The proof is now complete.
\end{proof}
\begin{prop}\label{prop12}
	If hypotheses $H(a),H(\xi),H(\vartheta),H(f)$ hold, then $\lambda^*\in\mathcal{L}$.
\end{prop}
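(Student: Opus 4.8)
The plan is to obtain a positive solution at the critical value as a strong limit of positive solutions at parameters increasing to $\lambda^*$. I would begin by choosing a sequence $\{\lambda_n\}_{n\ge1}\subseteq(0,\lambda^*)$ with $\lambda_n\uparrow\lambda^*$. By \eqref{eq42} we have $\lambda_n\in\mathcal{L}$ for every $n$, so Proposition \ref{prop7} yields elements of $S_{\lambda_n}\subseteq{\rm int}\,C_+$. In order to have quantitative control I would not take an arbitrary solution, but rather, for each $n$, the global minimizer $u_n$ of the coercive truncation--perturbation functional of Proposition \ref{prop7}, now truncated from below by $v$ and from above by a fixed element of $S_{\lambda_n}$. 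This produces $u_n\in S_{\lambda_n}\subseteq{\rm int}\,C_+$ with $v\le u_n$ (Proposition \ref{prop8}) and with energy bounded above by its value at the origin, namely $\hat\sigma_{\lambda_n}(u_n)\le 0$, uniformly in $n$. The uniform lower bound $v\le u_n$ will be crucial both for dominating the singular term and for excluding the trivial limit.

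The core of the argument is the uniform estimate $\sup_n\|u_n\|<\infty$ in $W^{1,p}_0(\Omega)$. Here I would reproduce, with constants independent of $n$ (which is legitimate since $\lambda_n\le\lambda^*<\infty$), the estimates carried out in the proof of Claim 1 of Proposition \ref{prop11}. Testing the weak formulation of $(P_{\lambda_n})$ with $h=u_n$ gives one identity, while the bound $\hat\sigma_{\lambda_n}(u_n)\le 0$ gives a second inequality. Adding them and using the inequality $pG(y)\ge(a(y),y)_{\RR^N}$ supplied by hypothesis $H(a)(iv)$ together with \eqref{eq62} (itself a consequence of $H(f)(iii)$), and controlling the singular contribution via $H(\vartheta)(ii)$ and $\vartheta(v)\in L^s(\Omega)$, I would bound $\{u_n\}$ in $L^\sigma(\Omega)$ (here $u_n=u_n^+$ because $u_n\ge v>0$); the crucial point is $q<\sigma$. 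The interpolation inequality \eqref{eq64} and the growth estimate \eqref{eq65}, exactly as in Proposition \ref{prop11}, then upgrade this to boundedness in $W^{1,p}_0(\Omega)$, since $tr<p$ and $q<p$. I expect this step to be the main obstacle: because $f$ does not satisfy the Ambrosetti--Rabinowitz condition, it is precisely the interplay of the two relations through $H(f)(iii)$ that tames the superlinear term, and one must check that no constant silently depends on $n$.

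With boundedness in hand I would pass to a subsequence $u_n\stackrel{w}{\rightarrow}u_*$ in $W^{1,p}_0(\Omega)$, with $u_n\to u_*$ in $L^t(\Omega)$ for all $t<p^*$ and $u_n\to u_*$ a.e. in $\Omega$. Testing $(P_{\lambda_n})$ with $h=u_n-u_*$ and letting $n\to\infty$, the reaction terms vanish in the limit by the compact embeddings and the growth bound $H(f)(i)$, while the singular term is controlled by $0\le\vartheta(u_n)\le\vartheta(v)\in L^s(\Omega)$ (from $H(\vartheta)(ii)$ and \eqref{eq23}) together with strong $L^{s'}$--convergence; hence $\limsup_{n\to\infty}\langle A(u_n),u_n-u_*\rangle\le 0$. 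The $(S)_+$--property of $A(\cdot)$ then gives $u_n\to u_*$ strongly in $W^{1,p}_0(\Omega)$. Passing to the limit in the weak formulation, with $\lambda_n\to\lambda^*$ and $\vartheta(u_n)\to\vartheta(u_*)$ handled by the dominated convergence theorem (the domination $\vartheta(v)\in L^s(\Omega)$, $s>N$, guaranteeing integrability against test functions), I conclude that $u_*$ is a nonnegative weak solution of $(P_{\lambda^*})$.

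Finally, passing to the limit in $v\le u_n$ yields $v\le u_*$, so $u_*\neq 0$; the nonlinear regularity theory of Lieberman and the nonlinear maximum principle, invoked exactly as in the proofs of Propositions \ref{prop6} and \ref{prop7}, give $u_*\in{\rm int}\,C_+$. Therefore $u_*\in S_{\lambda^*}$, which shows $\lambda^*\in\mathcal{L}$ and completes the proof. Beyond the uniform boundedness, the only delicate points are bookkeeping: ensuring that the upper truncation by an element of $S_{\lambda_n}$ does not destroy uniformity of the estimates (guaranteed by $v\le u_n$ and $\lambda_n\le\lambda^*$), and the passage to the limit in the singular term, which rests on the $L^s$--domination by $\vartheta(v)$.
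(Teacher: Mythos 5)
Your proposal is correct and follows essentially the same strategy as the paper's proof: approximate $\lambda^*$ from below by $\lambda_n\uparrow\lambda^*$, secure a uniform nonpositive energy bound for suitably chosen $u_n\in S_{\lambda_n}$, rerun the a priori estimates of Claim 1 of Proposition \ref{prop11} to bound $\{u_n\}_{n\ge1}$ in $W^{1,p}_0(\Omega)$, pass to the limit via the $(S)_+$-property of $A(\cdot)$, and use $v\le u_n$ together with the $L^s$-domination $\vartheta(u_n)\le\vartheta(v)$ to handle the singular term and exclude the trivial limit. The only (harmless) difference is the source of the energy bound: you take $u_n$ to be the global minimizer of a truncated functional and compare with the origin, whereas the paper takes the increasing sequence of solutions provided by Proposition \ref{prop9} and shows $\hat{\varphi}_{\lambda_n}(u_n)\le\hat{\varphi}_{\lambda_n}(u_{n-1})\le 0$ using \eqref{eq3} and the equation satisfied by $u_{n-1}$.
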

\begin{proof}
	Let $\{\lambda_n\}_{n\geq 1}\subseteq(0,\lambda^*)$ be such that $\lambda_n\uparrow\lambda^*$. We know that $\lambda_n\in\mathcal{L}$ for all $n\in\NN$ and so we can find $u_n=u_{\lambda_n}\in S_{\lambda_n}\subseteq{\rm int}\, C_+$ $(n\in\NN)$ increasing (see Proposition \ref{prop9}).
	
	Let $\hat{\varphi}_{\lambda_n}(\cdot)$ be the functional from the proof of Proposition \ref{prop11}, with $u_\mu=u_{n-1},\ u_\mu=u_{n+1}(n\geq 2)$. Then we have
	\begin{eqnarray}\label{eq70}
		\hat{\varphi}_{\lambda_n}(u_n)&\leq&\hat{\varphi}_{\lambda_n}(u_{n-1})\nonumber\\
		&=&\int_\Omega G(Du_{n-1})dz+\frac{1}{p}\int_\Omega[\xi(z)+\beta]u^p_{n-1}dz-\int_\Omega[\vartheta(u_{n-1})+\lambda_nu^{q-1}_{n-1}+\nonumber\\
		&&f(z,u_{n-1}+\beta u^{p-1}_{n-1})]u_{n-1}dz\nonumber\\
		&\leq&\int_{\Omega}G(Du_{n-1})dz+\frac{1}{p}\int_\Omega[\xi(z)+\beta]u^p_{n-1}dz-\int_\Omega[\vartheta(u_{n-1})+\lambda_{n-1}u^{q-1}_{n-1}+\nonumber\\
		&&f(z,u_{n-1})+\beta u^{p-1}_{n-1}]u_{n-1}dz\nonumber\\
		&\leq&\int_\Omega(a(Du_{n-1}),Du_{n+1})dz+\int_\Omega\xi(z)u^p_{n-1}dz-\int_{\Omega}[\vartheta(u_{n-1})+\lambda_{n-1}u^{q-1}_{n-1}+\nonumber\\
		&&f(z,u_{n-1})]u_{n-1}dz\ (\mbox{see (\ref{eq3}) and recall that}\ \beta>||\xi||_\infty)\nonumber\\
		&=&0\ (\mbox{since}\ u_{n-1}\in S_{\lambda_{n-1}}).
	\end{eqnarray}
	
	Also, we have
	\begin{eqnarray}\label{eq71}
		&&\left\langle A(u_n),h\right\rangle+\int_\Omega[\xi(z)+\beta]u^{p-1}_nhdz=\int_\Omega d_{\lambda_n}(z,u_n)hdz\\
		&&\mbox{for all}\ h\in W^{1,p}_0(\Omega)\ \mbox{and all}\ n\in\NN.\nonumber
 	\end{eqnarray}
	
	Using (\ref{eq70}), (\ref{eq71}) and reasoning as in the proof of Proposition \ref{prop11} (see Claim 1), we obtain that
		$$\{u_n\}_{n\geq 1}\subseteq W^{1,p}_0(\Omega)\ \mbox{is bounded}.$$
		
		From this, as in the proof of Proposition \ref{prop11}, exploiting the $(S)_+$ property of $A(\cdot)$, we obtain
		\begin{equation}\label{eq72}
			u_n\rightarrow u_*\ \mbox{in}\ W^{1,p}_0(\Omega).
		\end{equation}
		
		Passing to the limit as $n\rightarrow\infty$ in (\ref{eq71}) and using (\ref{eq72}), we have
		$$u_*\in S_{\lambda_*}\subseteq{\rm int}\,C_+\ \mbox{and so}\ \lambda^*\in\mathcal{L}.$$
The proof is now complete.
\end{proof}

This proposition implies that
$$\mathcal{L}=\left(0,\lambda^*\right].$$

Summarizing the situation for problem \eqref{eqp}, we can state the following bifurcation-type result.
\begin{theorem}\label{th13}
	If hypotheses $H(a),H(\xi),H(\vartheta),H(f)$ hold, then there exists $\lambda^*>0$ such that
	\begin{itemize}
		\item[(a)] for all $\lambda\in(0,\lambda^*)$ problem \ref{eqp} has at least two positive solutions
		$$u_0,\hat{u}\in{\rm int}\,C_+,\ u_0\neq\hat{u};$$
		\item[(b)] for $\lambda=\lambda^*$ problem \eqref{eqp} has at least one positive solution $u_*\in{\rm int}\, C_+$;
		\item[(c)] for all $\lambda>\lambda^*$ problem \eqref{eqp} has no positive solutions.
	\end{itemize}
\end{theorem}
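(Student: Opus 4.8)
The plan is to assemble the chain of propositions already proved into the three-part conclusion, since Theorem \ref{th13} is precisely the summary of the bifurcation analysis carried out in Propositions \ref{prop7}--\ref{prop12}. First I would set $\lambda^* = \sup\mathcal{L}$, as already defined. By Proposition \ref{prop7} the set $\mathcal{L}$ is nonempty (indeed $(0,\lambda_0]\subseteq\mathcal{L}$), so $\lambda^*>0$ is well defined; by Proposition \ref{prop10} we have $\lambda^*<+\infty$. Thus $\lambda^*$ is a genuine finite critical parameter, and the task reduces to reading off the structure of $\mathcal{L}$ and of the solution sets $S_\lambda$.

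For part (a), I would fix $\lambda\in(0,\lambda^*)$ and invoke the inclusion \eqref{eq42}, which gives $\lambda\in\mathcal{L}$. Proposition \ref{prop11} then directly supplies two distinct positive solutions $u_0,\hat{u}\in{\rm int}\,C_+$ with $u_0\neq\hat{u}$, so part (a) is immediate. For part (b), Proposition \ref{prop12} establishes that $\lambda^*\in\mathcal{L}$; since $S_{\lambda^*}\subseteq{\rm int}\,C_+$ by Proposition \ref{prop7}, there exists at least one positive solution $u_*\in{\rm int}\,C_+$, which is exactly the claim. For part (c), I would combine Propositions \ref{prop9}, \ref{prop10} and \ref{prop12} to conclude $\mathcal{L}=(0,\lambda^*]$; consequently, for every $\lambda>\lambda^*$ one has $\lambda\notin\mathcal{L}$, which by the definition of $\mathcal{L}$ means $S_\lambda=\emptyset$, i.e. problem \eqref{eqp} admits no positive solution.

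The genuine difficulties are not located in this final assembly but in the propositions it cites, so I would only need to make sure the monotone-interval structure is invoked cleanly. In particular, the delicate points are already resolved: the multiplicity in part (a) rests on the local-minimizer argument \eqref{eq51} together with the mountain pass geometry \eqref{eq53}--\eqref{eq54} and the $C$-condition verified in Claim 1 of Proposition \ref{prop11}; the finiteness $\lambda^*<+\infty$ in part (c) rests on the strong comparison principle (Proposition \ref{prop4}) applied to $m_0^\epsilon$ in \eqref{eq41}; and the attainment $\lambda^*\in\mathcal{L}$ in part (b) rests on the $(S)_+$-property of $A(\cdot)$ used to pass to the limit in \eqref{eq71}--\eqref{eq72} along an increasing sequence of solutions. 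The only step requiring care in the synthesis is the two-sided inclusion \eqref{eq42}: upgrading it to the equality $\mathcal{L}=(0,\lambda^*]$ uses the monotonicity of Proposition \ref{prop9} (to fill in $(0,\lambda^*)$) together with the endpoint attainment of Proposition \ref{prop12}. I therefore expect no new obstacle here; the theorem follows formally from the preceding results.
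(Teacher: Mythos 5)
Your proposal is correct and coincides with the paper's own treatment: Theorem \ref{th13} is stated there as a summary immediately after the identity $\mathcal{L}=(0,\lambda^*]$ is obtained, and its proof is exactly the assembly of Propositions \ref{prop7}--\ref{prop12} that you describe. No gaps.
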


\medskip
{\bf Acknowledgments.} This research was supported by the Slovenian Research Agency grants
P1-0292, J1-8131, J1-7025, N1-0064, and N1-0083.

\end{document}